 \footnotesize\setlength{\footnotemargin}{1em}\normalsize
 \edef\hangfootparindent{\the\parindent}
\renewcommand{\Box}{\framebox{\rule{0.3em}{0.0em}}}
\newtheorem{thm}{Theorem}[section]
\newtheorem{lema}{Lemma}[section]
\newtheorem{ex}{Example}[section]
\newtheorem{rem}{Remark}[section]
\newtheorem{defi}{Definition}[section]
\newenvironment{proof}{\noindent{\bf Proof. }}{\hfill $\Box$\medskip}
\renewcommand{\Box}{\hfill\rule{2.3mm}{2.3mm}}
\renewcommand{\Box}{\framebox{}}
\numberwithin{equation}{section}
\title{Solving bilevel programs based on lower-level Mond-Weir duality\thanks{This work was supported in part by NSFC (Nos. 12071280, 11901380).}
}
\author{Yu-Wei Li\thanks{\baselineskip 9pt School of Management, Shanghai University, Shanghai 200444, China. E-mail: yuwei\_li@shu.edu.cn.}, \ Gui-Hua Lin\thanks{\baselineskip 9.5pt Corresponding author. School of Management, Shanghai University, Shanghai 200444, China. Email: guihualin@shu.edu.cn.}, \ Xide Zhu\thanks{\baselineskip 9pt
School of Management, Shanghai University, Shanghai 200444, China. E-mail: xidezhu@shu.edu.cn.}
}
\date{\bigskip 
Submitted: March 2023; Revised: October 2023\\[1mm]
Published in INFORMS Journal on Computing: February 2024\\[1.5mm]
https://doi.org/10.1287/ijoc.2023.0108}
\begin{document}
\maketitle

\baselineskip 18pt

\noindent{\bf Abstract.}
This paper focuses on developing effective algorithms for solving bilevel program. The most popular approach is to replace the lower-level problem by its Karush-Kuhn-Tucker conditions to generate a mathematical program with complementarity constraints (MPCC). However, MPCC does not satisfy the Mangasarian-Fromovitz constraint qualification (MFCQ) at any feasible point. In this paper, inspired by a recent work using the lower-level Wolfe duality (WDP), we apply the lower-level Mond-Weir duality to present a new reformulation, called MDP, for bilevel program. It is shown that, under mild assumptions, they are equivalent in globally or locally optimal sense. An example is given to show that, different from MPCC, MDP may satisfy the MFCQ at its feasible points. Relations among MDP, WDP, and MPCC are investigated. On this basis, we extend the MDP reformulation to present another new reformulation (called eMDP), which has similar properties to MDP. Furthermore, in order to compare two new reformulations with the MPCC and WDP approaches, we design a procedure to generate 150 tested problems randomly and comprehensive numerical experiments show that MDP has quite evident advantages over MPCC and WDP in terms of feasibility to the original bilevel programs, success efficiency, and average CPU time,while eMDP is far superior to all other three reformulations.

\vspace{4pt}\noindent{\bf Keywords.}
Bilevel program, Mond-Weir duality, Wolfe duality, MPCC, constraint qualification.

\vspace{4pt}\noindent{\bf 2010 Mathematics Subject Classification.} \ 90C30, 90C33, 90C46.
\baselineskip 18pt

\section{Introduction}\label{intro}
In this paper, we focus on solving the bilevel program
\begin{eqnarray*}
{\rm(BP)}\qquad\min&&F(x, y)\\
\mbox{s.t.}&&(x,y)\in \Omega,~y \in {S}(x),
\end{eqnarray*}
where $x\in\mathbb{R}^n$ and $y\in\mathbb{R}^m$ represent the upper-level and lower-level variables, respectively, and $S(x)$ denotes the optimal solution set of the lower-level parameterized problem
\begin{eqnarray*}
{(\mathrm{P}_x)}\qquad\min\limits_{y}&&f(x, y)\nonumber\\
\mbox{s.t.}&&g(x, y) \le 0, ~h(x, y)=0.\nonumber
\end{eqnarray*}
Throughout, we denote by
$X:=\{x\in \mathbb{R}^n: \exists~ y\in \mathbb{R}^m~{\rm s.t.}~(x,y)\in\Omega\}$ and $Y(x) := \{y \in \mathbb{R}^{m} : g(x, y) \le 0, ~h(x, y) = 0\}.$
We assume that the upper-level objective function $F: \mathbb{R}^{n+m}\rightarrow\mathbb{R}$ is continuously differentiable, the lower-level functions $f: \mathbb{R}^{n+m} \rightarrow\mathbb{R}$, $g$: $\mathbb{R}^{n+m} \rightarrow\mathbb{R}^{p}$, $h$: $\mathbb{R}^{n+m} \rightarrow\mathbb{R}^{q}$ are all twice continuously differentiable, and ${S}(x) \neq \emptyset$ for each $x\in X$.

Bilevel program was first introduced in market economy by Stackelberg \cite{Stackelberg1952theory} and hence is also called a Stackelberg game or leader-follower game model in game theory. Systematic studies on bilevel program started from the work of \cite{Bracken1973mathematical}.
Nowadays, bilevel program has been applied to many fields such as industrial engineering, supply chain management, economic planning, machine learning, and so on \cite{Kunapuli2008classification, caprara2016bilevel, Lachhwani2018bilevel, ye2023difference, liu2023hierarchical, liu2023value}. For more details about developments on bilevel program, we refer the readers to \cite{Bard1998practical, Colson2007overview, Dempe2013bilevel, zeng2020practical, kleinert2021computing, byeon2022benders, lu2023penalty} and the references therein.

Bilevel program is generally difficult to solve due to its hierarchical structure. In fact, even for the linear case, it has been shown to be strongly NP-hard by Hansen \cite{Hansen1992new}.
So far, the most popular approach in dealing with BP is to make use of the Karush-Kuhn-Tucker (KKT) conditions of the lower-level problem to transform it into the mathematical program with complementarity constraints
\begin{eqnarray*}
{\rm(MPCC)}\qquad\min && F(x,y) \\
\mbox{s.t.}&& (x,y)\in \Omega,~h(x, y)=0,\\
&&u\geq0,~g(x, y)\le 0,~ u^Tg(x, y)=0,\\
&&\nabla_y f(x, y) + \nabla_y g(x, y) u+ \nabla_y h(x, y)v=0.
\end{eqnarray*}
If the lower-level problem $\mathrm{P}_x$ is convex in $y$ and satisfies some constraint qualifications, BP and MPCC are equivalent in globally optimal sense.
However, MPCC does not satisfy the Mangasarian-Fromovitz constraint qualification (MFCQ) at any feasible point so that the well-developed optimization algorithms in nonlinear programming may not be stable in solving MPCC.
There have been proposed a number of approximation algorithms such as relaxation and smoothing algorithms, penalty function algorithms, interior point algorithms, implicit programming algorithms, active-set identification algorithms, constrained equation algorithms, and nonsmooth algorithms for solving MPCC. See, e.g., \cite{Luo1996mathematical, Fukushima1998globally, Scholtes2001convergence, Lin2006hybrid, Lin2009solving, izmailov2012semismooth, Hoheisel2013theoretical, Guo2015solving} and the references therein for more details about MPCC. Another approach in dealing with bilevel program is based on the lower-level optimal value function and some optimality conditions and constraint qualifications were derived through this way \cite{Ye1995optimality, Ye2010new, Dempe2013bilevel}.
Although the lower-level optimal value function does not have analytic expressions in general and so it can not be solved directly by the popular optimization algorithms, some approximation algorithms were presented along this approach \cite{Lin2014solving, Xu2014smoothing, Ma2021combined}.

Recently, \cite{Li2022novel} suggested a new approach to solve bilevel program based on the lower-level Wolfe duality. Under some kind of convexity and the Guignard constraint qualification, it is shown that BP is equivalent to the single-level optimization problem
\begin{eqnarray*}
{\rm (WDP)}\qquad\min &&F(x, y)  \\
\mbox{s.t.}&& (x,y)\in \Omega,~g(x, y) \le 0, ~h(x, y)=0,\\
&&f(x, y) - f(x, z) - u^{T} g(x, z) - v^{T} h(x, z) \leq 0, \\
&&\nabla_z f(x, z) + \nabla_z g(x, z) u + \nabla_z h(x, z) v = 0, ~u\geq 0.	
\end{eqnarray*}
\noindent More interestingly, it was shown by example that, different from MPCC that fails to satisfy the MFCQ at any feasible point, WDP may satisfy the MFCQ at its feasible point. A relaxation method based on WDP was presented and numerical experiments indicate that, compared with MPCC, this approach is more efficient. See \cite{Li2022novel} for more details.

Inspired by the above work, we consider another approach for bilevel program based on the lower-level Mond-Weir duality. It will be shown in the subsequent sections that, under some conditions different from that assumed for WDP, BP is equivalent to the single-level optimization problem
\begin{eqnarray*}
{\rm (MDP)}\qquad\min&&F(x, y) \nonumber \\
\mbox{\rm s.t.}&&(x,y)\in \Omega,~g(x, y) \le 0, ~h(x, y)=0,\\
&&f(x, y) - f(x, z)\leq 0,~ u^T g(x, z) + v^T h(x,z)\geq0,\nonumber\\
&&\nabla_z f(x, z) + \nabla_z g(x, z) u + \nabla_z h(x, z) v = 0, ~u\geq 0.\nonumber
\end{eqnarray*}
We use an example to show the necessity to study this new approach. In fact, the following bilevel program is equivalent to the new MDP reformulation, but not to the WDP reformulation.

\begin{ex}\label{ex_WDP_MDP}\rm
Consider the bilevel program
\begin{eqnarray}\label{ex_WDP_MDP_1}
\min&&-x-y\\
\mbox{\rm s.t.}&&x\leq 1,~y \in \arg\min\limits_{y}\{y^3+y:y\geq x\}.\nonumber	
\end{eqnarray}
It is obvious that $(1,1)$ is a unique globally optimal solution to \eqref{ex_WDP_MDP_1} with the optimal value $-2$. For this problem, the WDP reformulation is
\begin{eqnarray}\label{ex_WDP_MDP_2}
\min && -x-y \nonumber\\
\mbox{\rm s.t.}&& x\leq 1,~ y\geq x,~3z^2+1-u=0, ~u\geq 0, \\
&&y^3+y- z^3-z+u(z-x)\leq0.\nonumber	
\end{eqnarray}
Consider the feasible point sequence $(x^k,y^k,z^k,u^k)=(0,k,-k,3k^2+1)$, $k=1,2,\cdots$. Letting $k\rightarrow +\infty$, we know that the optimal value of \eqref{ex_WDP_MDP_2} is $-\infty$, which means that \eqref{ex_WDP_MDP_1} is not equivalent to \eqref{ex_WDP_MDP_2} in globally optimal sense.

On the other hand, the MDP reformulation for \eqref{ex_WDP_MDP_1} is
\begin{eqnarray}\label{ex_WDP_MDP_3}
\min && -x-y \nonumber\\
\mbox{\rm s.t.}&& x\leq 1,~ y\geq x,~3z^2+1-u=0, ~u\geq 0,\\
&&y^3+y\leq z^3+z,~u(z-x)\leq 0.\nonumber	
\end{eqnarray}
The constraints $3z^2+1-u=0$ and $-u(z-x)\geq 0$ imply $z\leq x$. This, together with $y^3+y\leq z^3+z$ and $y\geq x$, implies $x=y=z$ by the strict monotonicity of the function $t^3+t$. It is easy to see that $(1,1,1,4)$ is the unique globally optimal solution of \eqref{ex_WDP_MDP_3} with the optimal value $-2$. This means that \eqref{ex_WDP_MDP_1} is equivalent to \eqref{ex_WDP_MDP_3} in globally optimal sense.
\end{ex}

The above example reveals that, compared with the WDP reformulation, the new MDP reformulation may be applied to different cases. Indeed,  the equivalent conditions assumed for MDP are different from that assumed for WDP in \cite{Li2022novel}. Moreover, the MDP reformulation  has some other characteristics. For example, its feasible region is smaller than the WDP reformulation and, when the lower-level objective is composed of a strictly monotone one-dimensional function $f(t)$ with $t=r(x,y)$, the constraint $f(x, y) - f(x, z)\leq 0$ may be simplified, but the WDP reformulation does not have this merit. The numerical experiments reported in Section \ref{numerical} indicate that the new MDP reformulation indeed has visible advantages over the WDP reformulation.

The paper is organized as follows.
In Section \ref{Preliminaries}, we review some constraint qualifications and some useful results about the Mond-Weir duality for constrained optimization problems.
In Section \ref{Reformulation-MDP}, we show the equivalence between BP and MDP in both globally and locally optimal senses. In Section \ref{conterex}, we show by an example that the MDP reformulation may satisfy the MFCQ at its feasible point, which is similar to WDP, but different from MPCC. In Section \ref{Comparison of three reformulations}, we investigate the relations among the stationary points of MDP, MPCC, and WDP, respectively.
In Section \ref{extension}, we extend the MDP reformulation to get a new reformulation (called eMDP). In particular, the weak and strong duality theorems related to eMDP and the convergence analysis for relaxation schemes are given in Appendix.
In Section \ref{numerical}, we focus on the numerical efficiency of the new approaches by comparing with MPCC and WDP. Our numerical examples include $150$ randomly generated linear bilevel programs and numerical experiments indicate that MDP has quite evident advantages over MPCC and WDP, while eMDP is far superior to all other three reformulations.
In Section \ref{conclusion}, we make some concluding remarks.

\section{Preliminaries}
\label{Preliminaries}
In this section, we recall some concepts and results useful in the subsequent sections. Consider the constrained optimization problem
\begin{eqnarray*}
{\rm (P)}\qquad\min&& f(y)\\
\mbox{s.t.}&& g(y) \le 0, ~h(y) = 0,
\end{eqnarray*}
where $f: \mathbb{R}^{m}\rightarrow\mathbb{R}$, $g$: $\mathbb{R}^{m}\rightarrow\mathbb{R}^{p}$, and $h$: $\mathbb{R}^{m}\rightarrow\mathbb{R}^{q}$.
Denote by $Y$ the feasible region of P.

\subsection{Constraint qualifications}
In this subsection, we recall some constraint qualifications used later on. Given a point $\bar{y} \in Y$, we define an index set as $I_{\bar{g}}:=\{ i : g_i(\bar{y})=0\}$.
The tangent and linearized cones of the set ${Y}$ at $\bar{y}$ are defined as
\begin{eqnarray*}
\mathcal{T}_{Y}(\bar{y}) := \left\{d \in \mathbb{R}^{m} :
		\exists\{y^{k}\} \subseteq {Y},~ \exists\{t_{k}\} \downarrow 0
		~{\rm such~ that}~ y^{k} \rightarrow \bar{y} ~{\rm and}~ \frac{y^{k}-\bar{y}}{t_{k}} \rightarrow d \right\}	
\end{eqnarray*}
and
\begin{eqnarray*}	
	\mathcal{L}_{Y}(\bar{y}) := \Big\{ d \in \mathbb{R}^{m} : \nabla g_{i}(\bar{y})^T d \leq 0~i \in I_{\bar{g}}, ~~\nabla h_{i}(\bar{y})^T d = 0~i=1, \cdots, q \Big\},
\end{eqnarray*}
respectively. For a given cone $D\subset \mathbb{R}^{m}$, its polar cone is defined as
	$D^\circ := \big\{d \in \mathbb{R}^{m} :  d^T y \leq0~\mbox{for~all}~y\in D  \big\}.$	

\begin{defi}
We say that problem P satisfies
		\begin{itemize}		
			\item
			Mangasarian-Fromovitz constraint qualification (MFCQ) at $\bar{y}\in {Y}$ if the set of gradients $\{\nabla h_j(\bar{y}) : j=1,\cdots, q\}$ is linearly independent and there exists $ d\in\mathbb{R}^m$ such that $\nabla g_i(\bar{y})^T d <0$ $(i\in I_{\bar{g}})$ and $\nabla h_j(\bar{y})^T d =0~(j=1,\cdots, q)$;		
			\item
			Guignard constraint qualification at $\bar{y}\in {Y}$ if $ \mathcal{L}_{Y}(\bar{y})^\circ = \mathcal{T}_{Y}(\bar{y})^\circ $.
		\end{itemize}
\end{defi}

\subsection{Mond-Weir duality for constrained optimization problems}
The Mond-Weir dual problem given in \cite{Mond1981generalized} for P is
\begin{eqnarray*}
{\rm (MD)}\qquad\max &&f(z)\\
\mbox{s.t.}&& \nabla_z  L(z, u, v)=0,~u^{T} g(z) + v^{T} h(z)\geq 0,~u\geq 0,	
\end{eqnarray*}
where $L(z, u, v) := f(z) + u^{T} g(z) + v^{T} h(z)$ represents the Lagrangian function. We denote by $M$ the feasible region of MD, and for any fixed $(u,v)$, we let
\begin{eqnarray*}
\mathcal{Z}(u,v):=\{z\in \mathbb{R}^m: \nabla_z L(z, u, v)=0 ,~u^{T} g(z) + v^{T} h(z)\geq 0\}.
\end{eqnarray*}
Note that, due to the existence of the terms $u^{T} g$ and $v^{T} h$, the Mond-Weir dual problem is usually a nonconvex optimization problem, even though the primal problem is convex.

To introduce the duality theorems for Mond-Weir duality, we need the following well-known generalized convexity concepts.

\begin{defi}\rm \label{convexity}
Let $f:\mathbb{R}^m \to \mathbb{R}$ be a differentiable function. $f$ is called pseudoconvex on $\mathcal{Y}\subseteq\mathbb{R}^m$ if $(y_1-y_2)^T\nabla f(y_2) \geq 0$ implies $f(y_1) \geq f(y_2)$ for any $y_1, y_2\in\mathcal{Y}$. $f$ is called quasiconvex on $\mathcal{Y}\subseteq\mathbb{R}^m$ if $f(y_1) \leq f(y_2)$ implies $(y_1-y_2)^T\nabla f(y_2)\leq 0$ for any $y_1, y_2\in\mathcal{Y}$.
\end{defi}

From Theorems 3 and 4 in \cite{Egudo1986duality}, we have the following duality theorems.

\begin{thm}\label{weak duality}\emph{\textbf{(weak Mond-Weir duality)}}
Suppose that $f$ is pseudoconvex and $u^{T} g + v^{T} h$ is quasiconvex over $Y\cup \mathcal{Z}(u,v)$ for any $u\in \mathbb{R}^p_+$ and $v\in \mathbb{R}^q$. Then, the optimal value of the primal problem is no less than its Mond-Weir dual problem, i.e.,
\begin{eqnarray*}
\min\limits_{y\in Y} f(y) \geq \max\limits_{(z, u, v)\in {M}} f(z).
\end{eqnarray*}
\end{thm}

\begin{thm}\label{strong duality}\emph{\textbf{(strong Mond-Weir duality)}}
Suppose that $f$ is pseudoconvex and $u^{T} g + v^{T} h$ is quasiconvex over $Y\cup \mathcal{Z}(u,v)$ for any $u\in \mathbb{R}^p_+$, $v\in \mathbb{R}^q$, and {\rm P} satisfies the Guignard constraint qualification at some solution $\bar{y}$. Then, there exists an optimal solution $(\bar{z},\bar{u},\bar{v})$ of {\rm MD} such that
\begin{eqnarray*}
\min\limits_{y\in Y} f(y)  =f(\bar{y})=f(\bar{z}) =\max\limits_{(z, u, v)\in {M}} f(z).
\end{eqnarray*}
\end{thm}

Note that, in Theorem \ref{strong duality}, the dual optimal solution $\bar{z}$ may not solve the primal problem P, unless there are some restrictions on the functions involved. See the following converse duality theorem given in \cite[Theorem 7]{Egudo1986duality}

\begin{thm}\label{converse duality}\emph{\textbf{(converse duality)}}
Let $\bar{y}$ solve {\rm P} and $(\bar{z},\bar{u},\bar{v})$ solve {\rm MD}.	Suppose that $f$ is pseudoconvex and $u^{T} g + v^{T} h$ is quasiconvex over $Y\cup \mathcal{Z}(u,v)$ for any $u\in \mathbb{R}^p_+$ and $v\in \mathbb{R}^q$. If $f$ is strictly pseudoconvex near $\bar{z}$ and {\rm P} satisfies the Guignard constraint qualification at $\bar{y}$, then $\bar{z}=\bar{y}$.
\end{thm}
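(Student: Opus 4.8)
The plan is to derive the value identity $f(\bar y)=f(\bar z)$ from strong duality, extract a first-order inequality on $\nabla f(\bar z)$ from dual stationarity and quasiconvexity, and then let strict pseudoconvexity force $\bar z=\bar y$.

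First I would apply the strong Mond-Weir duality (Theorem~\ref{strong duality0}): the hypotheses that $f$ is pseudoconvex, that $u^Tg+v^Th$ is quasiconvex over $Y\cup\mathcal{Z}(u,v)$, and that {\rm P} satisfies the Guignard constraint qualification at $\bar y$ are exactly those needed to conclude $\min_{y\in Y}f(y)=\max_{(z,u,v)\in M}f(z)$. Since $\bar y$ solves {\rm P} and $(\bar z,\bar u,\bar v)$ solves {\rm MD}, both extrema are attained at these points, so the optimal values coincide and $f(\bar y)=f(\bar z)$.

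Next I would produce the gradient inequality. Set $\phi:=\bar u^Tg+\bar v^Th$. Primal feasibility of $\bar y$ gives $g(\bar y)\le 0$ and $h(\bar y)=0$, which together with $\bar u\ge 0$ yields $\phi(\bar y)\le 0$, while dual feasibility of $(\bar z,\bar u,\bar v)$ gives $\phi(\bar z)\ge 0$. Hence $\phi(\bar y)\le\phi(\bar z)$, and since $\bar y\in Y$ and $\bar z\in\mathcal{Z}(\bar u,\bar v)$, the quasiconvexity of $\phi$ (Definition~\ref{convexity}) applied to this pair gives $(\bar y-\bar z)^T\nabla\phi(\bar z)\le 0$. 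The dual stationarity constraint $\nabla_z L(\bar z,\bar u,\bar v)=\nabla f(\bar z)+\nabla\phi(\bar z)=0$ then lets me substitute $\nabla f(\bar z)=-\nabla\phi(\bar z)$, yielding the crucial inequality $(\bar y-\bar z)^T\nabla f(\bar z)=-(\bar y-\bar z)^T\nabla\phi(\bar z)\ge 0$. I regard the feasibility sandwich $\phi(\bar y)\le 0\le\phi(\bar z)$ combined with the sign flip through stationarity as the real content of the argument.

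Finally I would argue by contradiction: if $\bar y\ne\bar z$, then the strict pseudoconvexity of $f$ near $\bar z$, applied to $(\bar y-\bar z)^T\nabla f(\bar z)\ge 0$, forces $f(\bar y)>f(\bar z)$, contradicting the identity $f(\bar y)=f(\bar z)$ from the first step; therefore $\bar z=\bar y$. I expect the main obstacle to be the localization implicit in ``strictly pseudoconvex near $\bar z$'': the strict inequality must legitimately apply to the pair $(\bar y,\bar z)$ even though $\bar y$ need not lie in a prescribed neighborhood of $\bar z$. I would address this by restricting to the segment $\bar z+t(\bar y-\bar z)$ with small $t\in(0,1]$, along which the nonnegativity $(\,\cdot\,-\bar z)^T\nabla f(\bar z)\ge 0$ is preserved so strictness near $\bar z$ applies, and then transporting the strict inequality back to $\bar y$ using the global pseudoconvexity of $f$ and continuity of $\nabla f$; this is clean when the gradient inequality is strict, and the borderline case $(\bar y-\bar z)^T\nabla f(\bar z)=0$ is where the localization must be handled most carefully.
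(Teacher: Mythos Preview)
The paper does not prove this result; it is stated without proof as a quotation of \cite[Theorem~7]{Egudo1986duality}. Your argument is exactly the standard one used there: obtain $f(\bar y)=f(\bar z)$ from strong duality, use $\phi(\bar y)\le 0\le\phi(\bar z)$ together with quasiconvexity to get $(\bar y-\bar z)^T\nabla\phi(\bar z)\le 0$, flip the sign via the stationarity relation $\nabla f(\bar z)=-\nabla\phi(\bar z)$, and contradict strict pseudoconvexity. Steps~1--3 are correct as written.

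On the localization worry you raise: in the generalized-convexity literature (and in Egudo--Mond specifically), ``strictly pseudoconvex at $\bar z$'' is a pointwise condition with \emph{global} comparison---for every $y\ne\bar z$ in the relevant domain, $(y-\bar z)^T\nabla f(\bar z)\ge 0$ forces $f(y)>f(\bar z)$---so $\bar y$ is not required to lie in a small ball around $\bar z$, and your step~4 is then immediate. If instead one reads ``near $\bar z$'' as confining \emph{both} points to a neighborhood, the segment workaround you sketch does not actually close the borderline case: when $(\bar y-\bar z)^T\nabla f(\bar z)=0$ the sign of $(\bar y-y_t)^T\nabla f(y_t)$ is indeterminate as $t\to 0$, and $y_t$ need not belong to $Y\cup\mathcal{Z}(\bar u,\bar v)$, so the global pseudoconvexity hypothesis is unavailable at $y_t$ as well. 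The resolution is to adopt the intended pointwise reading of the hypothesis rather than to repair the segment argument.
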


\section{New reformulation based on lower-level Mond-Weir duality }
\label{Reformulation-MDP}
In this section, we investigate the relations between BP and MDP. For convenience, we write the Mond-Weir dual problem of the lower-level problem $\mathrm{P}_x$ as
\begin{eqnarray*}\label{MDx}
{(\mathrm{MD}_x)}\qquad\max\limits_{z, u, v}&&f(x,z) \nonumber\\
\mbox{s.t.}&& \nabla_z L(x, z, u, v)=0,~u^{T} g(x, z) + v^{T} h(x, z)\geq 0,~u\geq 0.
\end{eqnarray*}
Denote by $M(x)$ the feasible region of $\mathrm{MD}_x$ and, for any fixed $(x,u,v)$, let
\begin{eqnarray*}
\mathcal{Z}(x,u,v):=\{z\in \mathbb{R}^m: \nabla_z L(x,z, u, v)=0,~u^{T} g(x, z) + v^{T} h(x, z)\geq 0\}.
\end{eqnarray*}
We first show the equivalence between BP and MDP in globally optimal sense.

\begin{thm}\label{MDP-th}
Suppose that, for any given $x\in X$, $u\in \mathbb{R}^p_+$, and $v\in \mathbb{R}^q$, $f(x,\cdot)$ is pseudoconvex and $u^{T} g(x, \cdot) + v^{T} h(x,\cdot)$ is quasiconvex over $Y(x)\cup \mathcal{Z}(x,u,v)$, and $\mathrm{P}_x$ satisfies the Guignard constraint qualification at some solution $y_x \in {S(x)}$. Then, if $(\bar{x},\bar{y})$ is an optimal solution to {\rm BP}, there exists $(\bar{z},\bar{u},\bar{v})$ such that $(\bar{x},\bar{y},\bar{z},\bar{u},\bar{v})$ is an optimal solution to {\rm MDP}. Conversely, if $(\bar{x},\bar{y},\bar{z},\bar{u},\bar{v})$ is an optimal solution to {\rm MDP}, then $(\bar{x},\bar{y})$ is an optimal solution to {\rm BP}.
\end{thm}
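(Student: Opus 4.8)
The plan is to reduce the theorem to a single \emph{feasibility correspondence}: a pair $(x,y)$ is feasible to BP if and only if it extends to a feasible point $(x,y,z,u,v)$ of MDP. Because the two problems share the objective $F(x,y)$ and the extra variables $(z,u,v)$ do not enter $F$, the global equivalence will follow at once from this correspondence by comparing optimal values in both directions. Note that only weak and strong duality are needed; the converse duality theorem (Theorem \ref{converse duality}) is not required, since we match optimal \emph{values} rather than identifying $\bar z=\bar y$.

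First I would establish the direction ``MDP feasible $\Rightarrow$ BP feasible'' via weak duality. Suppose $(x,y,z,u,v)$ is feasible to MDP. The last three groups of MDP constraints say precisely that $(z,u,v)\in M(x)$, while the first group gives $(x,y)\in\Omega$ and $y\in Y(x)$. Applying Theorem \ref{weak duality0} at the parameter $x$ yields $\min_{y'\in Y(x)} f(x,y') \ge \max_{(z',u',v')\in M(x)} f(x,z') \ge f(x,z)$. Combining this with the MDP constraint $f(x,y)\le f(x,z)$ and with $f(x,y)\ge \min_{y'\in Y(x)} f(x,y')$ (which holds since $y\in Y(x)$) forces the whole chain to collapse to equalities, so $f(x,y)=\min_{y'\in Y(x)} f(x,y')$, i.e. $y\in S(x)$. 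Hence $(x,y)$ is feasible to BP.

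Next I would prove ``BP feasible $\Rightarrow$ MDP feasible'' via strong duality. If $(x,y)$ is feasible to BP, then $y\in S(x)$ and $(x,y)\in\Omega$. Applying Theorem \ref{strong duality0} at the parameter $x$ (the Guignard constraint qualification holds at some $y_x\in S(x)$ by hypothesis) produces a dual optimal $(\bar z,\bar u,\bar v)\in M(x)$ with $f(x,\bar z)=\min_{y'\in Y(x)} f(x,y')$. Since $y\in S(x)$ attains the same minimal lower-level value, $f(x,y)=f(x,\bar z)$, whence $f(x,y)-f(x,\bar z)=0\le 0$; the remaining MDP constraints hold because $(\bar z,\bar u,\bar v)\in M(x)$. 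Thus $(x,y,\bar z,\bar u,\bar v)$ is feasible to MDP.

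Finally I would assemble the two optimality claims. For the forward claim, given a BP-optimal $(\bar x,\bar y)$ I extend it, as above, to an MDP-feasible point of identical objective value; any MDP-feasible point of strictly smaller value would project, by the first correspondence, to a BP-feasible point beating $(\bar x,\bar y)$, a contradiction. The converse claim is symmetric, using the second correspondence to lift a hypothetical better BP point back into MDP. I expect the main obstacle to be the forward correspondence, where one must argue the value equality $f(x,y)=f(x,\bar z)$ with care: strong duality is invoked at the prescribed solution $y_x$ rather than at $y$ itself, and the argument relies on $y$ and $y_x$ sharing the lower-level optimal value because both lie in $S(x)$.
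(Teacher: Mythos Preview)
Your proof is correct and follows essentially the same approach as the paper: both use weak Mond-Weir duality (Theorem~\ref{weak duality0}) to show that any MDP-feasible point projects to a BP-feasible pair, and strong Mond-Weir duality (Theorem~\ref{strong duality0}) to lift any BP-feasible pair to an MDP-feasible point with the same objective value. The paper routes this through an intermediate reformulation with the constraint $f(x,y)\le\max_{(z,u,v)\in M(x)}f(x,z)$, whereas you establish the feasibility correspondence directly; the two presentations are logically equivalent.
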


\begin{proof}
By the assumption that ${S}(x) \neq \emptyset$ for each $x\in X$, BP is equivalent to
\begin{eqnarray*}\label{VP}
\min&&F(x,y)\nonumber\\
\mbox{s.t.}&& (x,y)\in \Omega,~ y\in Y(x),\\
&&f(x, y) \leq \min\limits_{y \in Y(x)} f(x, y)\nonumber
\end{eqnarray*}
in globally optimal sense. On the other hand, the assumptions and Theorem \ref{strong duality} imply that, for each $x\in X$, the strong Mond-Weir duality holds between $\mathrm{P}_x$ and $\mathrm{MD}_x$, i.e., there exist $y_x\in Y(x)$ and $(z_x,u_x,v_x)\in M(x)$ such that
\begin{eqnarray*}	
\min\limits_{y\in Y(x)} f(x,y)=f(x,y_x)=f(x, z_x)=\max\limits_{(z, u, v)\in M(x)} f(x, z).
\end{eqnarray*}	
Thus, BP is further equivalent to
\begin{eqnarray}\label{MDVP}
\min&&F(x,y)\nonumber\\
\mbox{s.t.} && (x,y)\in \Omega,~ y\in Y(x),\\
&&f(x, y) \leq \max\limits_{(z, u, v) \in M(x)} f(x, z)\nonumber
\end{eqnarray}
in globally optimal sense. Next, we show that \eqref{MDVP} is equivalent to {\rm MDP} in globally optimal sense.
In fact, if $(\bar{x}, \bar{y})$ is a globally optimal solution to \eqref{MDVP}, it follows that
\begin{eqnarray}\label{MDP_2}
F(\bar{x}, \bar{y}) \leq F(x,y)
\end{eqnarray}
for any feasible point $(x,y)$ to \eqref{MDVP} and, by the feasibility of $(\bar{x}, \bar{y})$ to \eqref{MDVP},
\begin{eqnarray*}
f(\bar{x}, \bar{y}) \leq \max\limits_{(z, u, v)\in M(\bar{x})} f(\bar{x}, z)=f(\bar{x}, z_{\bar{x}}).
\end{eqnarray*}
This, together with $(z_{\bar{x}}, u_{\bar{x}}, v_{\bar{x}})\in M(\bar{x})$, indicates that $(\bar{x}, \bar{y}, z_{\bar{x}}, u_{\bar{x}}, v_{\bar{x}})$ is feasible to {\rm MDP}.
For any feasible point $(x, y, z, u, v)$ to {\rm MDP}, since
\begin{eqnarray*}
f(x, y) \leq f(x, z)\leq\max\limits_{(z', u', v') \in M(x)} f(x, z'),
\end{eqnarray*}
where the second inequality follows from $(z, u, v) \in M(x)$, we have that $(x, y)$ is feasible to \eqref{MDVP} and hence \eqref{MDP_2} holds. By the arbitrariness of $(x, y, z, u, v)$,  $(\bar{x}, \bar{y}, z_{\bar{x}}, u_{\bar{x}}, v_{\bar{x}})$ is a globally optimal solution to {\rm MDP}.

Conversely, if $(\bar{x}, \bar{y}, \bar{z}, {\bar{u}}, {\bar{v}})$ is a globally optimal solution to {\rm MDP}, we have $\nabla_z L(\bar{x}, \bar{z}, \bar{u}, \bar{v})=0, {\bar{v}}^{T} g(\bar{x}, \bar{z}) + {\bar{v}}^{T} h(\bar{x}, \bar{z})\geq0, \bar{u}\ge 0$ by $(\bar{z}, {\bar{u}}, {\bar{v}})\in M(\bar{x})$ and
\begin{eqnarray*}
f(\bar{x}, \bar{y}) \leq f(\bar{x},\bar{z})\le \max\limits_{(z, u, v)\in M(\bar{x})} f(\bar{x}, z),
\end{eqnarray*}
which means that $(\bar{x}, \bar{y})$ is feasible to \eqref{MDVP}. For any feasible point $(x, y)$ to \eqref{MDVP}, there must exist $(z_x,u_x,v_x)\in M(x)$ such that
\begin{eqnarray*}
f(x, y) \leq \max\limits_{(z, u, v) \in M(x)} f(x, z)= f(x,z_x)
\end{eqnarray*}
and so $(x,y,z_x,u_x,v_x)$ is feasible to {\rm MDP}. By the optimality of $(\bar{x}, \bar{y}, \bar{z}, \bar{u}, \bar{v})$ to {\rm MDP}, we have \eqref{MDP_2} and, by the arbitrariness of $(x, y)$, $(\bar{x}, \bar{y})$ is a globally optimal solution to \eqref{MDVP}. This completes the proof.
\end{proof}

\begin{rem}\rm
Recall that the equivalence conditions between BP and WDP given in \cite{Li2022novel} include the pseudoconvexity of the Lagrange function $L(x,\cdot, u, v)$ and the Guignard constraint qualification. It is not difficult to show by examples that the conditions in the above theorem and the conditions in \cite{Li2022novel} cannot be mutually implied. In other words, the new MDP reformulation may be applicable to different types of bilevel programs from the WDP reformulation.
\end{rem}

\begin{rem}\rm
Even if the lower-level problem does not satisfy the convexity conditions in Theorem \ref{MDP-th}, {\rm BP} is still possibly equivalent to {\rm MDP}, which can be illustrated by the following example.
\end{rem}

\begin{ex}\rm \label{ex-nocondition-MDP}
Consider the bilevel program
\begin{eqnarray}\label{ex_nocondition_MDP_1}		
\min&&2x-y\\
\mbox{\rm s.t.}&&x\geq0,~y\in \arg\min\limits_{y}\{y^3: y\geq x\}.\nonumber
\end{eqnarray}
For any $x\geq0$, since the lower-level objective function is strictly increasing with respect to $y$ on $[x,+\infty)$, we have $S(x)=\{x\}$. Thus, \eqref{ex_nocondition_MDP_1} has a unique optimal solution $(0,0)$ with the optimal value 0. The MDP reformulation of \eqref{ex_nocondition_MDP_1} is
\begin{eqnarray}\label{ex_nocondition_MDP_2}
\min&&2x-y \nonumber \\
\mbox{\rm s.t.} &&x\geq0,~~y\geq x,~~y^3\leq z^3,\\
&&u(z-x)\leq 0,~~3z^2-u=0,~~u\geq 0 \nonumber.
\end{eqnarray}
We next show that, for each $x\geq0$, $f(x,\cdot)$ is not pseudoconvex over $Y(x)\cup\mathcal{Z}(x,u)\equiv\mathbb{R}$ for any $u\geq0$, but the original program \eqref{ex_nocondition_MDP_1} is still equivalent to \eqref{ex_nocondition_MDP_2} in the sense of Theorem \ref{MDP-th}.
		
(i) Taking $z_1=-1$ and $z_2=0$, we have $\nabla_zf(x,z_2)=3{z_2}^2=0$ and $f(x,z_1)=-1 < 0=f(x,z_2).$ This means that
\begin{eqnarray*}
(z_1-z_2)^T\nabla_zf(x,z_2)\geq0 ~\Longrightarrow~ f(x,z_1)\geq f(x,z_2)
\end{eqnarray*}
is not always satisfied. Therefore, $f(x,\cdot)$ is not pseudoconvex over $\mathbb{R}$.
		
(ii) By eliminating the variable $u$, \eqref{ex_nocondition_MDP_2} can be rewritten as
\begin{eqnarray}\label{ex-nocondition-MDP-3}	
\min &&2x-y \\
\mbox{\rm s.t.}&&x\geq0,~~y\geq x,~~y^3\leq z^3,~~3z^2(z-x)\leq 0.\nonumber
\end{eqnarray}	
By $3z^2(z-x)\leq 0$, there are two cases: $z=0$ or $z\leq x$. In the case of $z=0$, the constraints $x\geq0,~y\geq x,~y^3\leq 0$ imply $x=y=0$ and hence $(0,0,0,0)$ is a unique optimal solution to \eqref{ex-nocondition-MDP-3}. In the case of  $z\leq x$, the constraints  $x\geq0,~y\geq x,~y^3\leq z^3,~z\leq x$ imply $x=y=z$ and hence $(0,0,0,0)$ is also a unique optimal solution to \eqref{ex-nocondition-MDP-3}. As a result, $(0,0,0,0)$ is a unique optimal solution to \eqref{ex-nocondition-MDP-3} with the optimal value $0$. Consequently, the bilevel program \eqref{ex_nocondition_MDP_1} is equivalent to its MDP reformulation \eqref{ex_nocondition_MDP_2} in the sense of Theorem \ref{MDP-th}, although the assumptions of Theorem \ref{MDP-th} are not satisfied.
\end{ex}

\begin{thm}\label{MDP-bioptima}
Let $(x, y, z, u, v)$ be feasible to {\rm MDP}. Then, under any conditions to ensure the weak Mond-Weir duality for $\mathrm{P}_x$ and $\mathrm{MD}_x$, $y$ is globally optimal to $\mathrm{P}_x$ and $(z,u,v)$ is globally optimal to $\mathrm{MD}_x$.
\end{thm}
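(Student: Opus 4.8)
The plan is to turn the feasibility constraints of MDP into a chain of inequalities that the weak Mond-Weir duality bound forces to collapse into equalities. First I would decode what MDP feasibility means for the point $(x,y,z,u,v)$. The constraints $g(x,y)\le 0$ and $h(x,y)=0$ say exactly that $y\in Y(x)$, so $y$ is feasible to $\mathrm{P}_x$; the constraints $\nabla_z L(x,z,u,v)=0$, $u^{T}g(x,z)+v^{T}h(x,z)\ge 0$, and $u\ge 0$ say exactly that $(z,u,v)\in M(x)$, so $(z,u,v)$ is feasible to $\mathrm{MD}_x$. The one remaining MDP constraint, $f(x,y)-f(x,z)\le 0$, records that $f(x,y)\le f(x,z)$, and this is the inequality that points in the direction opposite to duality.

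Next I would invoke the weak Mond-Weir duality bound for the pair $\mathrm{P}_x$ and $\mathrm{MD}_x$, which under the stated hypotheses reads
\begin{eqnarray*}
\min\limits_{y'\in Y(x)} f(x,y') \ge \max\limits_{(z',u',v')\in M(x)} f(x,z').
\end{eqnarray*}
Since $y\in Y(x)$ the value $f(x,y)$ lies above the primal minimum, and since $(z,u,v)\in M(x)$ the value $f(x,z)$ lies below the dual maximum. Splicing these two facts together with weak duality and with the MDP constraint $f(x,y)\le f(x,z)$ yields the closed loop
\begin{eqnarray*}
f(x,z) \ge f(x,y) \ge \min\limits_{y'\in Y(x)} f(x,y') \ge \max\limits_{(z',u',v')\in M(x)} f(x,z') \ge f(x,z).
\end{eqnarray*}

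The final step is the squeeze: because the chain starts and ends at $f(x,z)$, every inequality in it must be an equality. In particular $f(x,y)=\min_{y'\in Y(x)} f(x,y')$, which together with $y\in Y(x)$ shows that $y$ is globally optimal to $\mathrm{P}_x$; and $f(x,z)=\max_{(z',u',v')\in M(x)} f(x,z')$, which together with $(z,u,v)\in M(x)$ shows that $(z,u,v)$ is globally optimal to $\mathrm{MD}_x$. There is no serious technical obstacle here, since the argument is a direct sandwiching; the only point requiring care is to confirm that the MDP feasibility constraints decode cleanly into primal feasibility of $y$, dual feasibility of $(z,u,v)$, and the single reversed inequality $f(x,y)\le f(x,z)$ that closes the loop. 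The hypotheses ensuring weak duality enter only to supply the middle inequality and are otherwise left untouched.
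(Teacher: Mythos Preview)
Your proposal is correct and follows essentially the same approach as the paper: both arguments use MDP feasibility to extract $y\in Y(x)$, $(z,u,v)\in M(x)$, and $f(x,y)\le f(x,z)$, then splice these with the weak Mond-Weir duality inequality to form a closed chain that forces all inequalities to be equalities. Your write-up is slightly more explicit in decoding the feasibility constraints and in closing the loop at $f(x,z)$, but the substance is identical.
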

	
\begin{proof}
Since $(x, y, z, u, v)$ is feasible to {\rm MDP}, we have				
\begin{eqnarray}\label{MDP_bioptima_1}	\min\limits_{y'\in Y(x)}f(x,y')\leq f(x, y)\leq f(x, z)\leq \max\limits_{(z',u',v')\in M(x)} f(x,z').
\end{eqnarray}
The weak Mond-Weir duality assumption means $\min\limits_{y'\in Y(x)}f(x,y')\ge \max\limits_{(z',u',v')\in M(x)} f(x,z')$. Therefore, all inequalities in \eqref{MDP_bioptima_1} should be equalities, which shows that $y$ is globally optimal to $\mathrm{P}_x$ and $(z, u, v)$ is globally optimal to $\mathrm{MD}_x$. This completes the proof.
\end{proof}

Theorem \ref{MDP-th} shows the equivalence between BP and MDP in  globally optimal sense. Since they are nonconvex in general, we next consider their equivalence in locally optimal sense. Denote by $U(w)$ some neighborhood of a point $w$.

\begin{thm}\label{MDP-localsolution1}
Suppose that $(\bar{x},\bar{y})$ is locally optimal to {\rm BP} restricted on $U(\bar{x})\times U(\bar{y})$ and $\mathrm{P}_{\bar{x}}$ satisfies the Guignard constraint qualification at some optimal solution $\bar{z}$ with $\{\bar{u},\bar{v}\}$ being the corresponding Lagrange multipliers. Under any conditions to ensure the weak Mond-Weir duality for $\mathrm{P}_x$ and $\mathrm{MD}_x$ and $x\in U(\bar{x})$, $(\bar{x},\bar{y},\bar{z},\bar{u},\bar{v}) $ is locally optimal to {\rm MDP}. Conversely, if $(\bar{x},\bar{y},\bar{z},\bar{u},\bar{v}) $ is locally optimal to {\rm MDP} restricted on $U(\bar{x})\times U(\bar{y}) \times U(\bar{z})\times\mathbb{R}^{p}\times\mathbb{R}^{q}$ with $U(\bar{y}) \subseteq U(\bar{z})$ and $\mathrm{P}_x$ satisfies the Guignard constraint qualification at all points in $S(x)$ for any $x\in U(\bar{x})$, then $(\bar{x},\bar{y}) $ is locally optimal to {\rm BP}.
\end{thm}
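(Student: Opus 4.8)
The plan is to prove the two implications separately, using weak Mond-Weir duality to pass from an MDP-feasible quintuple to a BP-feasible pair, and the Guignard constraint qualification to go the other way by manufacturing a dual block with $z=y$. Throughout I treat the weak-duality hypothesis as being in force for $x\in U(\bar{x})$, as stated.

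For the necessity direction, I would first verify that the candidate quintuple $(\bar{x},\bar{y},\bar{z},\bar{u},\bar{v})$ is feasible to MDP. Since $\bar{z}$ is optimal to $\mathrm{P}_{\bar{x}}$ and the Guignard CQ holds there, the KKT conditions with multipliers $\{\bar{u},\bar{v}\}$ give $\nabla_z L(\bar{x},\bar{z},\bar{u},\bar{v})=0$, $\bar{u}\ge 0$, and, by complementary slackness together with $h(\bar{x},\bar{z})=0$, the sign condition $\bar{u}^T g(\bar{x},\bar{z})+\bar{v}^T h(\bar{x},\bar{z})=0\ge 0$; hence $(\bar{z},\bar{u},\bar{v})\in M(\bar{x})$. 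Because $\bar{y},\bar{z}$ are both globally optimal to $\mathrm{P}_{\bar{x}}$ they share the optimal value, so $f(\bar{x},\bar{y})-f(\bar{x},\bar{z})=0$, while the remaining constraints are inherited from BP-feasibility of $(\bar{x},\bar{y})$. For local optimality I would run the chain of Theorem \ref{MDP-bioptima}: for any MDP-feasible $(x,y,z,u,v)$ with $x\in U(\bar{x})$,
\begin{equation*}
f(x,y)\le f(x,z)\le \max_{(z',u',v')\in M(x)} f(x,z')\le \min_{y'\in Y(x)} f(x,y')\le f(x,y),
\end{equation*}
where the middle inequality is weak duality and the last holds since $y\in Y(x)$. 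Thus $y\in S(x)$ and $(x,y)$ is BP-feasible. Taking $U(\bar{x})\times U(\bar{y})\times\mathbb{R}^m\times\mathbb{R}^p\times\mathbb{R}^q$ as the MDP neighborhood, every such pair lies in $U(\bar{x})\times U(\bar{y})$, so local optimality of $(\bar{x},\bar{y})$ to BP gives $F(\bar{x},\bar{y})\le F(x,y)$; since the MDP objective ignores $(z,u,v)$, this is exactly local optimality to MDP.

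For the sufficiency direction, feasibility of $(\bar{x},\bar{y})$ to BP is immediate: $(\bar{x},\bar{y})\in\Omega$ by MDP-feasibility, and $\bar{y}\in S(\bar{x})$ by Theorem \ref{MDP-bioptima}. The heart of the argument is the optimality comparison. Given an arbitrary BP-feasible $(x,y)$ with $x\in U(\bar{x})$ and $y\in U(\bar{y})$, I would use that $y\in S(x)$ is globally optimal to $\mathrm{P}_x$ and that the Guignard CQ holds at $y$ — here I genuinely need the hypothesis that it holds at every point of $S(x)$, not merely at one — to extract KKT multipliers $(u_y,v_y)$ with $\nabla_z L(x,y,u_y,v_y)=0$, $u_y\ge 0$, and $u_y^T g(x,y)=0$. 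The quintuple $(x,y,y,u_y,v_y)$, obtained by setting $z=y$, is then MDP-feasible: the constraint $f(x,y)-f(x,z)\le 0$ holds with equality, the sign condition reads $u_y^T g(x,y)+v_y^T h(x,y)=0\ge 0$, and stationarity together with $u_y\ge 0$ are exactly the KKT relations.

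The decisive point is that this constructed quintuple lands in the prescribed MDP neighborhood. Its first two coordinates lie in $U(\bar{x})\times U(\bar{y})$, its multiplier coordinates are unconstrained, and its $z$-coordinate equals $y$, which lies in $U(\bar{z})$ precisely because of the standing inclusion $U(\bar{y})\subseteq U(\bar{z})$; this is exactly what that inclusion is for, since otherwise the auxiliary copy $z=y$ could escape the $z$-neighborhood. Local optimality of $(\bar{x},\bar{y},\bar{z},\bar{u},\bar{v})$ to MDP then yields $F(\bar{x},\bar{y})\le F(x,y)$, and as $(x,y)$ was an arbitrary nearby BP-feasible pair, $(\bar{x},\bar{y})$ is locally optimal to BP. I expect the main obstacle to be the neighborhood bookkeeping in this converse direction — specifically, justifying that $z=y$ remains admissible, which is where the inclusion hypothesis is indispensable — rather than any analytic difficulty, since the generalized-convexity content is entirely absorbed into the weak-duality inequality and the KKT extraction.
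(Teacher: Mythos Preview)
Your proposal is correct and follows essentially the same approach as the paper: verify feasibility of $(\bar{x},\bar{y},\bar{z},\bar{u},\bar{v})$ via the KKT conditions at $\bar{z}$, invoke Theorem~\ref{MDP-bioptima} (weak duality) to deduce $y\in S(x)$ for any nearby MDP-feasible quintuple, and for the converse construct $(x,y,y,u_y,v_y)$ from the Guignard CQ and use $U(\bar{y})\subseteq U(\bar{z})$ to place it in the prescribed neighborhood. Your version is in fact slightly tidier than the paper's in two places: in part~(i) you establish local optimality on the larger set $U(\bar{x})\times U(\bar{y})\times\mathbb{R}^m\times\mathbb{R}^p\times\mathbb{R}^q$ (the paper only uses a bounded neighborhood in $(z,u,v)$), and in part~(ii) you explicitly verify BP-feasibility of $(\bar{x},\bar{y})$ via Theorem~\ref{MDP-bioptima}, which the paper leaves implicit.
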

	
\begin{proof}		
The proof of this theorem can be divided into two cases: (i) necessity and (ii) sufficiency.

{\rm(i)} Since both $\bar{y}$ and $\bar{z}$ are globally optimal solutions to $\mathrm{P}_{\bar{x}}$ and $\{\bar{u},\bar{v}\}$ are the Lagrange multipliers corresponding to $\bar{z}$, we have
\begin{eqnarray*}
f(\bar{x},\bar{y}) = f(\bar{x},\bar{z}), ~\nabla_z L(\bar{x},\bar{z},\bar{u},\bar{v})=0,~\bar{u}^T g(\bar{x},\bar{z})=0,~ \bar{u}\geq0,~h(\bar{x},\bar{z})=0	
\end{eqnarray*}
and hence $(\bar{x},\bar{y},\bar{z},\bar{u},\bar{v}) $ is feasible to MDP. On the other hand, for any point $(x,y,z,u,v)$ feasible to MDP in the neighborhood $U(\bar{x})\times U(\bar{y})\times U(\bar{z})\times U(\bar{u})\times U(\bar{v})$ with $U(\bar{z})\times U(\bar{u})\times U(\bar{v})$ to be a neighbourhood of $(\bar{z},\bar{u},\bar{v})$, by the weak Mond-Weir duality assumption, the conclusion of Theorem \ref{MDP-bioptima} is true and hence $y\in S(x)$, which means that $(x,y)$ is feasible to BP and belongs to $U(\bar{x})\times U(\bar{y})$. Noting that $(\bar{x},\bar{y})$ is locally optimal to BP restricted on $U(\bar{x})\times U(\bar{y})$, we have
\begin{eqnarray}\label{MDP_localsolution_1}
F(\bar{x},\bar{y}) \leq F(x, y).
\end{eqnarray}
By the arbitrariness of $(x,y,z,u,v)$, $(\bar{x},\bar{y},\bar{z},\bar{u},\bar{v}) $ is a locally optimal solution to MDP.
	
{\rm(ii)} Let $(x,y)\in U(\bar{x})\times U(\bar{y})$ be feasible to BP. It follows that $y\in S(x)$. By the assumptions, $\mathrm{P}_x$ satisfies the Guignard constraint qualification at $y$ and hence there exist multipliers $\{u,v\}$ satisfying
\begin{eqnarray*}
\nabla_y L(x, y,u,v) = 0, ~u\geq 0, ~u^{T} g(x, y)=0,~h(x,y)=0.
\end{eqnarray*}
Consequently, $(x,y,y,u,v)$ is feasible to MDP and belongs to $U(\bar{x})\times U(\bar{y})\times U(\bar{y})\times U(\bar{u})\times U(\bar{v})$. Since $(\bar{x},\bar{y},\bar{z},\bar{u},\bar{v}) $ is locally optimal to MDP restricted on $U(\bar{x})\times U(\bar{y}) \times U(\bar{z})\times\mathbb{R}^{p}\times\mathbb{R}^{q}$ with $U(\bar{y}) \subseteq U(\bar{z})$, we have \eqref{MDP_localsolution_1}. By the arbitrariness of $(x,y)$, $(\bar{x},\bar{y}) $ is a locally optimal solution to BP. This completes the proof.
\end{proof}

The local optimality restricted on $U(\bar{x})\times U(\bar{y}) \times U(\bar{z})\times \mathbb{R}^{p}\times\mathbb{R}^{q}$ assumed in Theorem \ref{MDP-localsolution1} is very stringent. Next, we derive a result by restricting the local optimality on $U(\bar{x})\times U(\bar{y}) \times U(\bar{y})\times U(\bar{u})\times U(\bar{v})$. We start with the following lemma given in \cite{Li2022novel}.

\begin{lema}\label{lemma-usc}
The set-valued mapping $\Lambda : \mathbb{R}^{n+2m}\rightrightarrows \mathbb{R}^{p+q}$ defined by
\begin{eqnarray*}\label{Lambda}
\Lambda(x, y, z) := \big\{(u, v)\in\mathbb{R}^{p+q} : (x, y, z, u, v)\in \Gamma \big\}
\end{eqnarray*}
is outer semicontinuous, where		
\begin{eqnarray*}
			\Gamma:=\left\{ (x, y, z, u, v) :
			\begin{array}{l}
				f(x, y) - f(x, z) \leq 0,~\nabla _z L(x, z, u, v) = 0\\
				u^T g(x, z)+ v^T h(x,z)\geq0, ~u\ge 0
			\end{array}
			\right\}.
      \end{eqnarray*}
\end{lema}

\begin{thm}\label{MDP-localsolution2}
Let $(\bar{x},\bar{y},\bar{y},\bar{u},\bar{v}) $ be locally optimal to {\rm MDP} for any $(\bar{u},\bar{v})\in \Lambda(\bar{x},\bar{y},\bar{y})$. Under any conditions to ensure the weak Mond-Weir duality for $\mathrm{P}_{\bar{x}}$ and $\mathrm{MD}_{\bar{x}}$, if $\mathrm{P}_x$ satisfies the MFCQ at ${y}\in S({x})$ for all $(x,y)$ near $(\bar{x},\bar{y})$, $(\bar{x},\bar{y})$ is locally optimal to {\rm BP}.
\end{thm}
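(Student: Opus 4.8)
The plan is to argue by contradiction. Suppose $(\bar{x},\bar{y})$ is \emph{not} locally optimal to {\rm BP}. Then there is a sequence $(x^k,y^k)\to(\bar{x},\bar{y})$ of points feasible to {\rm BP} (so $y^k\in S(x^k)$, $(x^k,y^k)\in\Omega$, $g(x^k,y^k)\le0$, $h(x^k,y^k)=0$) with $F(x^k,y^k)<F(\bar{x},\bar{y})$. First I record that $\bar{y}\in S(\bar{x})$: since $(\bar{x},\bar{y},\bar{y},\bar{u},\bar{v})$ is feasible to {\rm MDP}, Theorem \ref{MDP-bioptima} together with the weak Mond-Weir duality assumption gives that $\bar{y}$ is globally optimal to $\mathrm{P}_{\bar{x}}$. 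Hence the hypothesis that MFCQ holds at $y\in S(x)$ for all $(x,y)$ close to $(\bar{x},\bar{y})$ applies both along the sequence and at the limit point $(\bar{x},\bar{y})$.

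Because $\mathrm{P}_{x^k}$ satisfies MFCQ at $y^k\in S(x^k)$, there exist KKT multipliers $(u^k,v^k)$ with
\[
\nabla_y L(x^k,y^k,u^k,v^k)=0,\quad u^k\ge0,\quad (u^k)^T g(x^k,y^k)=0,\quad h(x^k,y^k)=0.
\]
Taking $z=y^k$, these relations show that $(u^k,v^k)\in\Lambda(x^k,y^k,y^k)$ and that $(x^k,y^k,y^k,u^k,v^k)$ is feasible to {\rm MDP}. The crucial step is to prove that the multiplier sequence $\{(u^k,v^k)\}$ is bounded. I would do this by the standard normalization argument: if $t_k:=\|(u^k,v^k)\|\to\infty$, pass to a subsequence along which $(u^k,v^k)/t_k\to(\hat u,\hat v)$ with $\|(\hat u,\hat v)\|=1$; dividing the stationarity equation by $t_k$, using the complementarity relations to annihilate the components of $\hat u$ corresponding to constraints inactive at $(\bar{x},\bar{y})$, and letting $k\to\infty$ yields a nonzero $(\hat u,\hat v)$ with $\hat u\ge0$, $\hat u_i=0$ whenever $g_i(\bar{x},\bar{y})<0$, and $\sum_i\hat u_i\nabla_y g_i(\bar{x},\bar{y})+\nabla_y h(\bar{x},\bar{y})\hat v=0$. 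This contradicts MFCQ at $\bar{y}$, so $\{(u^k,v^k)\}$ is bounded.

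Having boundedness, extract a subsequence with $(u^k,v^k)\to(u^*,v^*)$. Since $(x^k,y^k,y^k)\to(\bar{x},\bar{y},\bar{y})$ and $(u^k,v^k)\in\Lambda(x^k,y^k,y^k)$, the outer semicontinuity of $\Lambda$ from Lemma \ref{lemma-usc} gives $(u^*,v^*)\in\Lambda(\bar{x},\bar{y},\bar{y})$. Because the hypothesis holds for \emph{any} element of $\Lambda(\bar{x},\bar{y},\bar{y})$, the point $(\bar{x},\bar{y},\bar{y},u^*,v^*)$ is locally optimal to {\rm MDP} on some neighborhood $U(\bar{x})\times U(\bar{y})\times U(\bar{y})\times U(u^*)\times U(v^*)$. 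For $k$ large enough, the {\rm MDP}-feasible point $(x^k,y^k,y^k,u^k,v^k)$ lies in this neighborhood, whence $F(x^k,y^k)\ge F(\bar{x},\bar{y})$, contradicting $F(x^k,y^k)<F(\bar{x},\bar{y})$. Therefore $(\bar{x},\bar{y})$ is locally optimal to {\rm BP}.

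The main obstacle is precisely the boundedness of the multiplier sequence $\{(u^k,v^k)\}$: without a convergent subsequence one cannot invoke the outer semicontinuity of $\Lambda$ to place the limit in $\Lambda(\bar{x},\bar{y},\bar{y})$ and thereby trigger the ``for any multiplier'' local optimality hypothesis. This is exactly where the pointwise MFCQ assumption, in particular at the limit $\bar{y}\in S(\bar{x})$, is indispensable, since outer semicontinuity alone does not furnish a bounded multiplier sequence. This is also the feature that lets us relax the stringent neighborhood $U(\bar{x})\times U(\bar{y})\times U(\bar{z})\times\mathbb{R}^p\times\mathbb{R}^q$ of Theorem \ref{MDP-localsolution1}(ii) to the localized one in the multiplier variables.
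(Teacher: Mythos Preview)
Your proposal is correct and follows essentially the same route as the paper's proof: argue by contradiction, use MFCQ along the sequence to obtain KKT multipliers $(u^k,v^k)$, prove their boundedness via the standard normalization argument (which hinges on MFCQ at the limit point $(\bar{x},\bar{y})$), extract a convergent subsequence, invoke the outer semicontinuity of $\Lambda$ from Lemma~\ref{lemma-usc} to place the limit multiplier in $\Lambda(\bar{x},\bar{y},\bar{y})$, and then derive a contradiction with the ``for any $(\bar{u},\bar{v})$'' local optimality hypothesis. Your closing paragraph correctly identifies the boundedness step as the crux and the precise role of MFCQ, which matches the paper's emphasis.
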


\begin{proof}
Since $(\bar{x},\bar{y},\bar{y},\bar{u},\bar{v}) $ is feasible to MDP, it follows from the weak Mond-Weir duality and Theorem \ref{MDP-bioptima} that
$\bar{y}\in S(\bar{x})$ and hence $(\bar{x},\bar{y})$ is  feasible to BP. We next show the conclusion by contradiction.
	
Suppose that $(\bar{x},\bar{y})$ is not locally optimal to BP. There must exist a sequence $\{(x^k, y^k)\}$ converging to $(\bar{x},\bar{y})$ with $x^k\in X$, $y^k\in S(x^k)$ such that
\begin{eqnarray}\label{local_inequality}
F(x^k, y^k) < F(\bar{x},\bar{y}), \quad k=1,2,\cdots.
\end{eqnarray}
By the assumptions, when $k$ is large sufficiently, $\mathrm{P}_x$ satisfies the MFCQ at ${y^k}\in S({x^k})$, namely, there exist multipliers $u^k\in\mathbb{R}^p$ and $v^k\in\mathbb{R}^q$ such that
\begin{eqnarray}
\label{stablesystem1}&&\nabla _y L(x^k, y^k, u^k, v^k) = 0, ~h(x^k, y^k)=0,\\
\label{stablesystem2}&&u^k\ge 0,~g(x^k, y^k) \leq 0,~(u^k)^Tg(x^k, y^k)=0 .	
\end{eqnarray}
We can show that the sequence of $\{(u^k,v^k)\}$ is bounded. Suppose by contradiction that $\{(u^k,v^k)\}$ is unbounded. Without loss of generality, we assume that $\{\frac{u^k}{\|u^k\|+\|v^k\|}\}$ and $\{\frac{v^k}{\|u^k\|+\|v^k\|}\}$ are  convergent. Denote their limits by $\hat{u}$ and $\hat{v}$, respectively. It follows that
\begin{eqnarray}\label{uv_inequality}
\|\hat{u}\|+\|\hat{v}\|=1.
\end{eqnarray}
Dividing by $\|{u}^k\|+\|{v}^k\|$ on both sides of \eqref{stablesystem1}-\eqref{stablesystem2} and letting $k\rightarrow\infty$ yield
\begin{eqnarray}
\label{stablesystem3_1}
&&\nabla_yg(\bar{x},\bar{y})\hat{u}+\nabla_y h(\bar{x},\bar{y})\hat{v}=0,\\
\label{stablesystem3_2}&&
\hat{u}\geq 0, ~g(\bar{x},\bar{y})\le 0, ~\hat{u}^{T} g(\bar{x},\bar{y}) = 0.
\end{eqnarray}
Let ${\cal I}_g$ be the active index set of $g$ at $(\bar{x},\bar{y})$. By \eqref{stablesystem3_2}, \eqref{stablesystem3_1} reduces to
\begin{eqnarray}\label{stablesystem3_3}		
\sum_{i\in {\cal I}_g} \hat{u}_i\nabla_y g_i(\bar{x},\bar{y})+\sum_{j=1}^q \hat{v}_j\nabla_y h_j(\bar{x},\bar{y})=0.
\end{eqnarray}
Since $\mathrm{P}_{\bar{x}}$ satisfies the MFCQ at $\bar{y}$, the gradients $\nabla_y h_j(\bar{x},\bar{y})~(j=1, \cdots, q) $ are linearly independent and there exists $ d\in\mathbb{R}^m $ such that
$d^T\nabla _y h_j(\bar{x},\bar{y}) =0$ for each $j$ and $d^T \nabla_y g_i(\bar{x}, \bar{y})<0$ for each $i\in{\cal I}_g$. Taking the inner product on both sides of \eqref{stablesystem3_3} with $d$, we have
\begin{eqnarray*}
0=\sum_{i\in {\cal I}_g} \hat{u}_i d^T\nabla_y g_i(\bar{x},\bar{y})<0,
\end{eqnarray*}
which implies ${\cal I}_g=\emptyset$ and hence $\hat{u}=0$ by \eqref{stablesystem3_2}. Thus, \eqref{stablesystem3_1} becomes $\nabla_y h(\bar{x},\bar{y})\hat{v}=0$, which implies $\hat{v}=0$ because the gradients $ \nabla_y  h_j(\bar{x},\bar{y})~(j=1, \cdots, q) $ are linearly independent. This contradicts \eqref{uv_inequality}. Therefore, the sequence of $\{(u^k,v^k)\}$ is bounded.
	
Since $(x^k, y^k, y^k) \to (\bar{x},\bar{y},\bar{y})$ as $k \to \infty$, by the outer semicontinuousness of $\Lambda$ from Lemma \ref{lemma-usc}, the sequence $\{(u^k, v^k)\}$ has at least an accumulation point $(\tilde{u},\tilde{v})\in \Lambda(\bar{x},\bar{y},\bar{y})$ as $k\to\infty$. By the assumptions, the limit point $(\bar{x}, \bar{y}, \bar{y}, \tilde{u}, \tilde{v})$ is also locally optimal to MDP. Note that, from the definition of $\Lambda$, $(x^k, y^k, y^k, u^k, v^k)$ is feasible to MDP for each $k$. Thus, the inequality \eqref{local_inequality} is a contradiction. Consequently, $(\bar{x},\bar{y})$ must be a locally optimal solution to BP. This completes the proof.
\end{proof}

\section{MDP may satisfy the MFCQ at its feasible points}\label{conterex}
As is known to us, MPCC fails to satisfy the MFCQ at any feasible point so that the well-developed optimization algorithms in nonlinear programming may not be stable in solving MPCC. Compared with MPCC, it has been shown in \cite{Li2022novel} that WDP may satisfy the MFCQ at its feasible points. The following example indicates that MDP also has this advantage over MPCC.
	
\begin{ex}\rm\label{ex-MDP-MFCQ}
Consider the bilevel program
\begin{eqnarray}\label{ex_MDP_MFCQ_1}	      \min&&(x+y)^2\\
\mbox{s.t.}&&x\in[-1,1] ,~y \in \arg\min\limits_{y}\{y^3-3y:y\geq x\}.\nonumber
\end{eqnarray}
It is evident that $S(x)=\{1\}$ for any $x\in [-1,1]$ and so \eqref{ex_MDP_MFCQ_1} has a unique optimal solution $(-1,1)$. The MDP reformulation for \eqref{ex_MDP_MFCQ_1} is
\begin{eqnarray}\label{ex_MDP_MFCQ_2}
\min &&(x+y)^2 \nonumber\\
\mbox{s.t.}&&y^3-3y-z^3+3z\leq 0,~u(z-x)\leq0,~u\geq0 ,\\
&&3z^2-3-u=0,~y\geq x,~x\in[-1,1].\nonumber
\end{eqnarray}
It is not difficult to verify that $(x^*, y^*, z^*, u^*)=(-1, 1, -2,9)$ is a globally optimal solution to \eqref{ex_MDP_MFCQ_2}. In what follows, we show that the MFCQ holds at this point. In fact, the gradient corresponding to the unique equality constraint $3z^2-3-u=0$ at this point is
\begin{eqnarray*}
a_1:=(0,0,6z^*,-1)=(0,0,-12,-1),
\end{eqnarray*}
which is a nonzero vector, and hence it is linearly independent. Note that the active inequality constraints at this point contain $y^3-3y-z^3+3z\leq 0,~x\geq -1$ and the corresponding gradients at the point are respectively
\begin{eqnarray*}
b_1:=(0,3{y^*}^2-3,-3{z^*}^2+3,0)=(0,0,-9,0), \quad b_2:=(-1,0,0,0).
\end{eqnarray*}
Let $d_1:=(1,0,1,-12)$. Then, we have
\begin{eqnarray*}
d_1^Ta_1=0,\quad d_1^Tb_1<0,\quad d_1^Tb_2<0.
\end{eqnarray*}
This means that \eqref{ex_MDP_MFCQ_2} satisfies the MFCQ at the feasible point $(-1, 1, -2,9)$.
\end{ex}

This example reveals that, in theory at least, MDP is relatively easier to solve than MPCC. However, under some conditions, the MFCQ may also fail to hold for MDP, as shown in the following result.

\begin{thm}\label{MDP-MFCQ-Fail}
Let $(x, y, z, u, v)$ be a feasible point of {\rm MDP}. Then, the MFCQ fails to hold at this point if $z=y$.
\end{thm}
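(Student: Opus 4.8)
The plan is to exhibit an explicit nonzero multiplier vector that makes a nonnegative combination of the gradients of the active inequality constraints, together with a free-sign combination of the equality-constraint gradients, vanish; by the standard dual (Gordan/Motzkin) characterization, producing such a certificate is precisely the failure of the MFCQ. So the whole proof reduces to bookkeeping of gradient blocks at the point $w=(x,y,z,u,v)$ with $z=y$.

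First I would pin down which constraints are active. Writing the two coupling inequalities as $\phi(x,y,z):=f(x,y)-f(x,z)\le0$ and $\psi(x,z,u,v):=-\bigl(u^{T}g(x,z)+v^{T}h(x,z)\bigr)\le0$, the equality $z=y$ gives $\phi=0$, so $\phi$ is active. Since feasibility forces $h(x,y)=0$, we get $u^{T}g(x,y)\ge0$, while $u\ge0$ and $g(x,y)\le0$ give $u^{T}g(x,y)\le0$; hence $u^{T}g(x,y)=0$, so $\psi$ is active and, termwise, $u_ig_i(x,y)=0$ for every $i$. In particular $g_i(x,y)<0$ forces $u_i=0$, so for those indices the sign constraint $-u_i\le0$ is active.

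Then I would propose the certificate: multiplier $1$ on both $\phi$ and $\psi$, multiplier $u_i\ge0$ on each active constraint $g_i(x,y)\le0$ (inactive ones carry $u_i=0$ by complementarity), a free multiplier $v_j$ on each $h_j(x,y)=0$, and multiplier $-g_i(x,y)\ge0$ on each active sign constraint $-u_i\le0$; no multiplier on the stationarity equality $\nabla_zL=0$ is needed. I would then check the cancellation block by block in $(x,y,z,u,v)$. The $z$-block of $\nabla\phi+\nabla\psi$ equals $-\nabla_zf(x,z)-\bigl(\nabla_zg(x,z)u+\nabla_zh(x,z)v\bigr)=-\nabla_zL(x,z,u,v)=0$ by feasibility; the $v$-block is $-h(x,y)=0$; the $u$-block $-g(x,y)$ is killed by the sign-constraint multipliers; the $x$-block, in which $\nabla_x\phi=\nabla_xf(x,y)-\nabla_xf(x,z)=0$ because $z=y$, reduces to $-(\nabla_xg(x,y)u+\nabla_xh(x,y)v)$ and is cancelled by the $g$- and $h$-multipliers; and the leftover $y$-block is $\nabla_yf(x,y)+\nabla_yg(x,y)u+\nabla_yh(x,y)v=\nabla_yL(x,y,u,v)$, which vanishes because at $z=y$ the stationarity constraint reads $\nabla_zL(x,y,u,v)=\nabla_yL(x,y,u,v)=0$. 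Since the multipliers on the active inequalities $\phi$ and $\psi$ both equal $1\ne0$, the combination is nontrivial, the active gradients are positively linearly dependent, and the MFCQ fails at $w$.

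The crux, which I would flag explicitly, is the pair of cancellations that use $z=y$ and feasibility simultaneously: $\nabla_z(\phi+\psi)=-\nabla_zL=0$, and the residual $y$-block being exactly $\nabla_yL(x,y,u,v)$, i.e. the stationarity constraint read at $z=y$. The only genuine hazard is differentiating $u^{T}g(x,z)+v^{T}h(x,z)$ and $f(x,z)$ with respect to $z$ and re-expressing the results through $\nabla_zL=0$; once the five gradient blocks are laid out the cancellation is purely algebraic, and the nonnegativity of the two nontrivial multipliers is immediate.
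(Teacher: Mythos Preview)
Your proof is correct and follows essentially the same approach as the paper: you construct the very same abnormal multiplier $(\alpha,\beta,\gamma,\eta^g,\eta^h,\eta^u)=(1,0,1,u,v,-g(x,y))$ and verify that it is a nonzero element of the set of abnormal multipliers, which is exactly the certificate the paper exhibits. Your write-up is in fact more explicit than the paper's, since you carry out the block-by-block cancellation and check complementarity of $\eta^g$ and $\eta^u$ with the active index sets, whereas the paper simply asserts that $(1,0,1,u,v,-g(x,y))\in\mathcal{M}$ ``is not difficult to verify.''
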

			
\begin{proof}
For simplicity, we take the upper constraint $(x,y)\in \Omega$ away from $\mathrm{MDP}$. The set of abnormal multipliers for {\rm MDP} at $(x, y, z, u, v)$ is given by
\begin{eqnarray*}
\mathcal{M}:=\left\{\begin{array}{lll}
			&{0= \alpha(\nabla_x f(x, y)-\nabla_x f(x, z))+\nabla^2_{zx}L(x,z,u,v)\beta}\\[1mm]
			&{~~~~~ +\nabla_x g(x, y) (\eta^g-\gamma u)+\nabla_x h(x, y) (\eta^h-\gamma v) }\\[1mm]
			&{0=\alpha\nabla_y f(x, y)+ \nabla_y g(x, y)\eta^g+ \nabla_y h(x, y)\eta^h}\\[1mm]
			{(\alpha, \beta, \gamma, \eta^g, \eta^h, \eta^u):\ }&{0=-\alpha\nabla_z f(x,z)+\nabla^2_{zz}L(x,z,u,v)\beta-\gamma\nabla_z g(x, z)u-\gamma\nabla_z h(x, z)v}\\[1mm]
			&{0=\nabla_z g(x, z)^{T}\beta-\gamma g(x, z)-\eta^u}\\[1mm]
			&{0=\nabla_z h(x, z)^{T}\beta-\gamma h(x, z)}\\[1mm]
			&{\alpha\geq 0,~\alpha(f(x, y)-f(x, z))=0,~~\eta^u\geq 0,~u^T \eta^u =0} \\[1mm]
			&{\gamma\geq 0,~\gamma (u^T g(x,z)+v^T h(x,z))=0,~~\eta^g\geq 0,~ g(x, y)^T\eta^g =0}
		\end{array}\right\}.	
\end{eqnarray*}
If $z=y$, it is not difficult to verify by the feasibility of $(x, y, y, u)$ to {\rm MDP} that $( 1, 0, 1, u, v, -g(x, y))\in \mathcal{M}$.
Recall that, by \cite[Chapter 6.3]{Clarke1990optimization}, a nonzero abnormal multiplier exists if and only if the MFCQ does not hold. Therefore, the MFCQ does not hold at $(x, y, y, u,v)$ for {\rm MDP}. This completes the proof.
\end{proof}

\section{Relations among MPCC, WDP, and MDP}
\label{Comparison of three reformulations}
So far, we have three different single-level reformulations for BP, namely, MPCC, WDP, and MDP. It is meaningful and important to explore the relations among them. For simplicity, throughout this section, we take the upper constraint $(x,y)\in \Omega$ away from the original program BP. Actually, there is no any difficulty to extend the subsequent analysis to the general case.

We mainly investigate the relations between stationary points of MPCC, WDP, and MDP, since they are all nonconvex optimization problems in general.
	
\begin{thm}\label{relations-KKT}
Let $(\bar{x},\bar{y},\bar{z},\bar{u},\bar{v})$ be a KKT point of {\rm WDP}. If $(\bar{x}, \bar{y}, \bar{z}, \bar{u},\bar{v})$ is feasible to {\rm MDP}, it is also a KKT point of {\rm MDP}.
\end{thm}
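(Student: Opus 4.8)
The plan is to exploit the fact that the constraints of WDP and MDP differ only in how one inequality is written. Writing $\phi_{\mathrm M}(x,y,z):=f(x,y)-f(x,z)$ and $\psi(x,z,u,v):=u^{T}g(x,z)+v^{T}h(x,z)$, the single WDP constraint $f(x,y)-f(x,z)-u^{T}g(x,z)-v^{T}h(x,z)\le 0$ is exactly $\phi_{\mathrm W}:=\phi_{\mathrm M}-\psi\le 0$, whereas MDP splits it into the two constraints $\phi_{\mathrm M}\le 0$ and $\psi\ge 0$. All remaining constraints, namely $g(x,y)\le0$, $h(x,y)=0$, $\nabla_z L(x,z,u,v)=0$, and $u\ge0$, are identical in the two reformulations. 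The key identity $\nabla\phi_{\mathrm W}=\nabla\phi_{\mathrm M}-\nabla\psi$ (as gradients in all variables $(x,y,z,u,v)$) is what drives the argument.

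First I would write out the WDP KKT system at $(\bar{x},\bar{y},\bar{z},\bar{u},\bar{v})$ with multipliers $\alpha\ge0$ for $g(x,y)\le0$, $\beta$ for $h(x,y)=0$, $\gamma\ge0$ for $\phi_{\mathrm W}\le0$, $\lambda$ for $\nabla_z L=0$, and $\eta\ge0$ for $u\ge0$. Then I would propose MDP multipliers by copying $\alpha,\beta,\lambda,\eta$ for the shared constraints and setting both the multiplier of $\phi_{\mathrm M}\le0$ and the multiplier of $\psi\ge0$ equal to $\gamma$. With this choice the Lagrangian of MDP coincides identically with that of WDP, since $\gamma\,\nabla\phi_{\mathrm W}=\gamma\,\nabla\phi_{\mathrm M}-\gamma\,\nabla\psi$; hence the WDP stationarity equation is literally the MDP stationarity equation, and nothing further is needed there. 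The sign requirements ($\alpha,\gamma,\eta\ge0$) and the complementarity conditions on the shared constraints ($\alpha^{T}g(\bar{x},\bar{y})=0$ and $\eta^{T}\bar{u}=0$) transfer verbatim from WDP.

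The only genuine work, and the main obstacle, is verifying complementary slackness for the two split constraints $\phi_{\mathrm M}\le0$ and $\psi\ge0$, for which WDP supplies only the single relation $\gamma\,\phi_{\mathrm W}=0$, i.e.\ $\gamma(\phi_{\mathrm M}-\psi)=0$. Here I would argue by cases. If $\gamma=0$, then both $\gamma\,\phi_{\mathrm M}=0$ and $\gamma\,\psi=0$ hold trivially. If $\gamma>0$, then $\phi_{\mathrm M}=\psi$ at the point; but feasibility of $(\bar{x},\bar{y},\bar{z},\bar{u},\bar{v})$ to MDP gives $\phi_{\mathrm M}\le0$ and $\psi\ge0$, so $\phi_{\mathrm M}=\psi$ forces $\phi_{\mathrm M}=\psi=0$, and both complementarity conditions again hold. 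This is precisely the step that invokes the hypothesis of MDP feasibility, and it is the crux of the proof; once it is established, the constructed multipliers satisfy the full MDP KKT system and the claim follows.
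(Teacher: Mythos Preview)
Your proposal is correct and follows essentially the same route as the paper: copy the shared multipliers, set both split multipliers equal to the single WDP multiplier $\gamma$ (the paper's $\hat\alpha$), use $\nabla\phi_{\mathrm W}=\nabla\phi_{\mathrm M}-\nabla\psi$ to carry stationarity over, and then invoke MDP feasibility together with $\gamma\ge0$ to split the single complementarity $\gamma(\phi_{\mathrm M}-\psi)=0$ into $\gamma\,\phi_{\mathrm M}=0$ and $\gamma\,\psi=0$. The paper phrases the last step as ``two nonpositive terms summing to zero'' rather than as a case split on $\gamma$, but the content is identical.
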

		
\begin{proof}
Since $(\bar{x},\bar{y},\bar{z},\bar{u},\bar{v})$ is a KKT point of WDP,  there is $(\zeta^g, \zeta^h, \zeta^u, \hat{\alpha}, \hat{\beta}) \in \mathbb{R}^{2p+q+1+m}$ such that
\begin{eqnarray}	
&& \nabla_x  F(\bar{x},\bar{y}) +\hat{\alpha}(\nabla_x  f(\bar{x},\bar{y})-\nabla_x L(\bar{x},\bar{z}, \bar{u},\bar{v}))+\nabla_{zx}^2 L(\bar{x},\bar{z}, \bar{u},\bar{v})\hat{\beta}
    		\nonumber\\
    		\label{WDPKKT-1}&&+\nabla_x g(\bar{x},\bar{y}) \zeta^g+\nabla_x h(\bar{x},\bar{y}) \zeta^h =0,  \\	    	
    		\label{WDPKKT-2} &&\nabla_y F(\bar{x},\bar{y}) + \hat{\alpha}\nabla_y f(\bar{x},\bar{y}) + \nabla_y g(\bar{x},\bar{y})\zeta^g
    		+\nabla_y h(\bar{x},\bar{y})\zeta^h=0, \\     	\label{WDPKKT-3}&&\nabla_{zz}^2 L(\bar{x},\bar{z}, \bar{u},\bar{v}) \hat{\beta}=0, \\
    		\label{WDPKKT-4}&&-\hat{\alpha} g(\bar{x},\bar{z}) + \nabla_z g(\bar{x},\bar{z})^T\hat{\beta} - \zeta^u=0, \\
    		\label{WDPKKT-5}&&-\hat{\alpha} h(\bar{x},\bar{z})+\nabla_z h(\bar{x},\bar{z})^T\hat{\beta} =0,\\
    		\label{WDPKKT-6}&&0\le \widehat{\alpha} ~\bot ~f(\bar{x},\bar{y})-L(\bar{x},\bar{z},\bar{u},\bar{v}) \leq0,  \\
    		\label{WDPKKT-7}&&0\le \zeta^g ~\bot ~g(\bar{x},\bar{y})\leq 0,  \\
    		\label{WDPKKT-8}&&0\le \zeta^u ~\bot ~\bar{u} \geq 0,\\
    		\label{WDPKKT-9}&&\nabla_z L(\bar{x},\bar{z}, \bar{u},\bar{v})=0,~h(\bar{x},\bar{y})=0.
    	\end{eqnarray}
Setting
$\eta^g:=\zeta^g,~\eta^h:=\zeta^h,~\eta^u:=\zeta^u,~\alpha:=\widehat{\alpha},~\beta:=\hat{\beta},$ and $\gamma:=\widehat{\alpha}$, we have
\begin{eqnarray}		
&&\nabla_x  F(\bar{x},\bar{y})  + \alpha(\nabla_x f(\bar{x},\bar{y})-\nabla_x f(\bar{x},\bar{z}))-\gamma(\nabla_x g(\bar{x},\bar{z})\bar{u}+\nabla_x h(\bar{x},\bar{z})\bar{v}) \nonumber \\
		\label{MDPKKT-1}&&+\nabla_{zx}^2 L(\bar{x},\bar{z}, \bar{u},\bar{v}) \beta  +\nabla_x g(\bar{x},\bar{y}) \eta^g+\nabla_x h(\bar{x},\bar{y}) \eta^h=0,  \\
		\label{MDPKKT-2}&&\nabla_y F(\bar{x},\bar{y}) + \alpha\nabla_y f(\bar{x},\bar{y}) + \nabla_y g(\bar{x},\bar{y})\eta^g+ \nabla_y h(\bar{x},\bar{y})\eta^h=0, \\
		\label{MDPKKT-3}&&-\alpha\nabla_z f(\bar{x},\bar{z})-\gamma(\nabla_z g(\bar{x},\bar{z})\bar{u}+\nabla_z h(\bar{x},\bar{z})\bar{v})+\nabla_{zz}^2 L(\bar{x},\bar{z}, \bar{u},\bar{v}) \beta=0, \\
		\label{MDPKKT-4}&&-\gamma g(\bar{x},\bar{z})+\nabla_y g(\bar{x},\bar{z})^T\beta - \eta^u=0, \\
		\label{MDPKKT-5}&&-\gamma h(\bar{x},\bar{z})+\nabla_z h(\bar{x},\bar{z})^T\beta=0, \\
		\label{MDPKKT-8}&&0\le \eta^g ~\bot ~g(\bar{x},\bar{y})\leq 0,  \\
		\label{MDPKKT-9}&&0\le \eta^u ~\bot ~\bar{u} \geq 0,\\
		\label{MDPKKT-10}&&\nabla_z L(\bar{x},\bar{z}, \bar{u},\bar{v})=0,~h(\bar{x},\bar{y})=0.
\end{eqnarray}
By \eqref{WDPKKT-6}, we further have
\begin{eqnarray*}	\widehat{\alpha}(f(\bar{x},\bar{y})-L(\bar{x},\bar{z},\bar{u},\bar{v}))=		\alpha(f(\bar{x},\bar{y})-f(\bar{x},\bar{z}))+\gamma (-\bar{u}^Tg(\bar{x},\bar{z})-\bar{v}^Th(\bar{x},\bar{z}))=0.
\end{eqnarray*}
Since $(\bar{x},\bar{y},\bar{z},\bar{u},\bar{v})$ is feasible to MDP, we have $f(\bar{x},\bar{y})-f(\bar{x},\bar{z})\leq0$ and $\bar{u}^Tg(\bar{x},\bar{z})+\bar{v}^Th(\bar{x},\bar{z})\geq0$, which implies
\begin{eqnarray}
\label{MDPKKT-6}&&0\leq\alpha ~\bot ~f(\bar{x},\bar{y})- f(\bar{x},\bar{z})\leq 0, \\
\label{MDPKKT-7}&&0\le \gamma ~\bot ~\bar{u}^{T} g(\bar{x},\bar{z})+\bar{v}^{T} h(\bar{x},\bar{z}) \geq0
\end{eqnarray}
by the nonnegativity of $\alpha$ and $\gamma$. \eqref{MDPKKT-1}-\eqref{MDPKKT-7} means that $(\bar{x}, \bar{y}, \bar{z}, \bar{u},\bar{v})$ is a KKT point of {\rm MDP}. This completes the proof.
\end{proof}

Given a feasible point $(\bar{x},\bar{y},\bar{u},\bar{v})$ of MPCC, we define the following index sets:
\begin{eqnarray*}
\ \ I_{0+}:=\{i : g_{i}(\bar{x},\bar{y})=0, ~\bar{u}_i>0\},\ \
I_{-0}:=\{i : g_{i}(\bar{x},\bar{y})<0, ~\bar{u}_i=0\},\ \
I_{00}:=\{i : g_{i}(\bar{x},\bar{y})=0=\bar{u}_i\}.
\end{eqnarray*}

\begin{defi}\rm
We call $ (\bar{x},\bar{y},\bar{u},\bar{v})$ a strongly stationary point (or S-stationary point) of MPCC if there exists $(\lambda^g,\lambda^h,\lambda^u,\lambda^L)\in\mathbb{R}^{2p+q+m}$ such that
\begin{eqnarray}
\label{MPCC-S-1}&& \nabla_x F(\bar{x},\bar{y}) + \nabla_{yx}^2 L(\bar{x},\bar{y}, \bar{u},\bar{v}) \lambda^L  + \nabla_x g(\bar{x},\bar{y}) \lambda^g+ \nabla_x h(\bar{x},\bar{y}) \lambda^h=0,\\
			\label{MPCC-S-2}&&\nabla_y F(\bar{x},\bar{y}) + \nabla_{yy}^2 L(\bar{x},\bar{y}, \bar{u},\bar{v})\lambda^L  + \nabla_y g(\bar{x},\bar{y}) \lambda^g+ \nabla_y h(\bar{x},\bar{y}) \lambda^h=0, \\
			\label{MPCC-S-3}&&\nabla_y g(\bar{x},\bar{y})^T \lambda^L  -\lambda^u= 0, \\
			\label{MPCC-S-4}&&\nabla_y h(\bar{x},\bar{y})^T \lambda^L = 0, \\
			\label{MPCC-S-5}&&\nabla_{y} L(\bar{x},\bar{y}, \bar{u},\bar{v}) =0, ~g(\bar{x},\bar{y})\le 0, ~\bar{u}\ge 0,~h(\bar{x},\bar{y})=0,\\
			\label{MPCC-S-6}&&\lambda^g_i = 0, ~~i \in I_{-0}, \\
			\label{MPCC-S-7}&&\lambda^u_i = 0, ~~i \in I_{0+},\\
			\label{MPCC-S-8}&&\lambda^g_i \geq 0, ~\lambda^u_i \geq 0, ~~i\in I_{00}.
\end{eqnarray}
\end{defi}

\begin{rem}\rm
Other popular stationarity concepts for {\rm MPCC} include the Mordukhovich stationarity (or M-stationarity) and the Clarke stationarity (or C-stationarity). Among these stationarities, the S-stationarity is the strongest and the most favorable one in the study of {\rm MPCC}.
\end{rem}

\begin{thm}\label{KKT=S}
If $(\bar{x},\bar{y},\bar{y},\bar{u},\bar{v})$ is a KKT point of {\rm MDP}, then $(\bar{x}, \bar{y}, \bar{u},\bar{v})$ is an S-stationary point of {\rm MPCC}.
\end{thm}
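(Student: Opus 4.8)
The plan is to take the KKT system of MDP at $(\bar{x},\bar{y},\bar{y},\bar{u},\bar{v})$ --- the system \eqref{MDPKKT-1}--\eqref{MDPKKT-10} written out in the proof of Theorem \ref{relations-KKT}, now specialized to $\bar{z}=\bar{y}$ --- and to produce an explicit change of multipliers turning it into the S-stationarity conditions \eqref{MPCC-S-1}--\eqref{MPCC-S-8}. Writing the MDP multipliers as $(\eta^g,\eta^h,\eta^u,\alpha,\gamma,\beta)$, I would set
\[
\lambda^L:=\beta,\qquad \lambda^g:=\eta^g-\gamma\bar{u},\qquad \lambda^h:=\eta^h-\gamma\bar{v},\qquad \lambda^u:=\gamma\,g(\bar{x},\bar{y})+\eta^u,
\]
and then check \eqref{MPCC-S-1}--\eqref{MPCC-S-8} one by one.

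First I would record the simplifications forced by $\bar{z}=\bar{y}$ and by feasibility. The constraint $f(\bar{x},\bar{y})-f(\bar{x},\bar{z})\le 0$ holds with equality, so every difference of the form $\nabla f(\bar{x},\bar{y})-\nabla f(\bar{x},\bar{z})$ vanishes; feasibility gives $h(\bar{x},\bar{y})=0$, which removes the $\gamma\,h(\bar{x},\bar{z})$ term; and the dual-feasibility inequality $\bar{u}^Tg(\bar{x},\bar{y})+\bar{v}^Th(\bar{x},\bar{y})\ge 0$, combined with $\bar{u}\ge 0$ and $g(\bar{x},\bar{y})\le 0$, forces the complementarity $\bar{u}^Tg(\bar{x},\bar{y})=0$. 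Together with $\nabla_y L(\bar{x},\bar{y},\bar{u},\bar{v})=0$ (from \eqref{MDPKKT-10}) this already shows that $(\bar{x},\bar{y},\bar{u},\bar{v})$ is feasible to MPCC and establishes \eqref{MPCC-S-5}.

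Next I would verify the four stationarity equations. Under the substitution, the $x$-gradient equation \eqref{MDPKKT-1} collapses directly to \eqref{MPCC-S-1}; adding the $y$-gradient equation \eqref{MDPKKT-2} to the $z$-gradient equation \eqref{MDPKKT-3} (with $\bar{z}=\bar{y}$) cancels the two $\alpha\nabla_y f$ terms and reproduces \eqref{MPCC-S-2}; equation \eqref{MDPKKT-4} reads $\nabla_y g(\bar{x},\bar{y})^T\beta=\gamma\,g(\bar{x},\bar{y})+\eta^u=\lambda^u$, which is exactly \eqref{MPCC-S-3}; and \eqref{MDPKKT-5} together with $h(\bar{x},\bar{y})=0$ gives $\nabla_y h(\bar{x},\bar{y})^T\beta=0$, i.e. \eqref{MPCC-S-4}.

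Finally I would dispatch the index-set conditions using the complementarities \eqref{MDPKKT-8} and \eqref{MDPKKT-9}. On $I_{-0}$, where $g_i(\bar{x},\bar{y})<0$, these give $\eta^g_i=0$, hence $\lambda^g_i=\eta^g_i-\gamma\bar{u}_i=0$, which is \eqref{MPCC-S-6}; on $I_{0+}$, where $\bar{u}_i>0$, they give $\eta^u_i=0$, hence $\lambda^u_i=\gamma\,g_i(\bar{x},\bar{y})+\eta^u_i=0$, which is \eqref{MPCC-S-7}; and on $I_{00}$, where $g_i(\bar{x},\bar{y})=\bar{u}_i=0$, the $\gamma$-contributions drop out, leaving $\lambda^g_i=\eta^g_i\ge 0$ and $\lambda^u_i=\eta^u_i\ge 0$, i.e. \eqref{MPCC-S-8}. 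The computation is largely bookkeeping; the one load-bearing idea is the multiplier transformation above, in particular folding the dual-feasibility multiplier $\gamma$ into the inequality multipliers via $\lambda^g=\eta^g-\gamma\bar{u}$ and $\lambda^u=\gamma\,g(\bar{x},\bar{y})+\eta^u$. The step most prone to error is the sign accounting on the biactive set $I_{00}$, where both $\lambda^g_i$ and $\lambda^u_i$ must be nonnegative; this works precisely because $\bar{u}_i=g_i(\bar{x},\bar{y})=0$ there clears the $\gamma$-terms and leaves the manifestly nonnegative $\eta$-multipliers.
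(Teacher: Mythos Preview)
Your proposal is correct and follows essentially the same route as the paper: the paper uses the identical multiplier transformation $\lambda^L=\beta$, $\lambda^g=\eta^g-\gamma\bar u$, $\lambda^h=\eta^h-\gamma\bar v$, $\lambda^u=\eta^u+\gamma g(\bar x,\bar y)$, adds the $y$- and $z$-gradient equations to obtain \eqref{MPCC-S-2}, and then checks the index-set conditions exactly as you do. Your explicit derivation of $\bar u^T g(\bar x,\bar y)=0$ (hence MPCC feasibility) is a detail the paper leaves implicit, but otherwise the arguments coincide.
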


\begin{proof}
Since $(\bar{x},\bar{y},\bar{y},\bar{u},\bar{v})$ is a KKT point of {\rm MDP},  there is $(\eta^g, \eta^h, \eta^u, \alpha, \beta,\gamma) \in \mathbb{R}^{2p+q+2+m}$ such that
	\begin{eqnarray}
		\label{MDPKKT2-1}&&\nabla_x  F(\bar{x},\bar{y})  + \nabla_{yx}^2 L(\bar{x},\bar{y}, \bar{u},\bar{v}) \beta  +\nabla_x g(\bar{x},\bar{y}) (\eta^g-\gamma\bar{u})+\nabla_x h(\bar{x},\bar{y}) (\eta^h-\gamma\bar{v})=0,  \\
		\label{MDPKKT2-2}&&\nabla_y F(\bar{x},\bar{y}) + \alpha\nabla_y f(\bar{x},\bar{y}) + \nabla_y g(\bar{x},\bar{y})\eta^g+ \nabla_y h(\bar{x},\bar{y})\eta^h=0, \\
		\label{MDPKKT2-3}&&-\alpha\nabla_y f(\bar{x},\bar{y})+\nabla_{yy}^2 L(\bar{x},\bar{y}, \bar{u},\bar{v}) \beta- \gamma\nabla_y g(\bar{x},\bar{y}) \bar{u}- \gamma\nabla_y h(\bar{x},\bar{y}) \bar{v}=0, \\
		\label{MDPKKT2-4}&&\nabla_y g(\bar{x},\bar{y})^T\beta -\gamma g(\bar{x},\bar{y})- \eta^u=0, \\
		\label{MDPKKT2-5}&&\nabla_y h(\bar{x},\bar{y})^T\beta=0, \\
		\label{MDPKKT2-6}&&\nabla_y L(\bar{x},\bar{y}, \bar{u},\bar{v})=0,~h(\bar{x},\bar{y})=0, ~\alpha\geq0, ~0\le \gamma ~\bot ~\bar{u}^{T} g(\bar{x},\bar{y})+\bar{v}^{T} h(\bar{x},\bar{y}) \geq0, \\
		\label{MDPKKT2-7}&&0\le \eta^g ~\bot ~g(\bar{x},\bar{y})\leq 0,  \\
		\label{MDPKKT2-8}&&0\le \eta^u ~\bot ~\bar{u} \geq 0.
\end{eqnarray}
Adding \eqref{MDPKKT2-2} and \eqref{MDPKKT2-3} together yields
\begin{eqnarray}\label{MDPKKT2-9}		
\nabla_y F(\bar{x},\bar{y})+\nabla_{yy}^2 L(\bar{x},\bar{y}, \bar{u},\bar{v}) \beta + \nabla_y g(\bar{x},\bar{y})(\eta^g-\gamma\bar{u})+ \nabla_y h(\bar{x},\bar{y})(\eta^h-\gamma\bar{v})=0.
\end{eqnarray}
Letting $\lambda^g:=\eta^g-\gamma\bar{u},~\lambda^h:=\eta^h-\gamma\bar{v}, ~\lambda^u:=\eta^u+\gamma g(\bar{x},\bar{y})$, and $\lambda^L:=\beta$, we obtain \eqref{MPCC-S-1}-\eqref{MPCC-S-5} from \eqref{MDPKKT2-1} and \eqref{MDPKKT2-4}-\eqref{MDPKKT2-9} immediately. It remains to show \eqref{MPCC-S-6}-\eqref{MPCC-S-8}. In fact, if $i\in I_{-0}$, which means $g_{i}(\bar{x},\bar{y})<0$ and $\bar{u}_i=0$, we have $\eta_i^g=0$ by \eqref{MDPKKT2-7} and hence $\lambda_i^g=\eta_i^g-\gamma \bar{u}_i=0$, namely, \eqref{MPCC-S-6} holds. If $i\in I_{0+}$, which means $g_{i}(\bar{x},\bar{y})=0$ and $\bar{u}_i>0$, we have $\eta_i^u=0$ by \eqref{MDPKKT2-8} and hence $\lambda_i^u=\eta_i^u+\gamma g_{i}(\bar{x},\bar{y})=0$, namely, \eqref{MPCC-S-7} holds. If $i\in I_{00}$, which means $g_{i}(\bar{x},\bar{y})=\bar{u}_i=0$, we have $\lambda_i^g=\eta_i^g\geq 0$ and $\lambda_i^u=\eta_i^u \geq 0$ by \eqref{MDPKKT2-7} and \eqref{MDPKKT2-8}, respectively, namely, \eqref{MPCC-S-8} holds.
This completes the proof.
\end{proof}

Note that the converse of Theorem \ref{KKT=S} is incorrect, which can be illustrated by the following example.

\begin{ex}\rm\label{ex-MDP-KKT<S}
Consider the bilevel program %
\begin{eqnarray}\label{ex-MDP-KKT<S-1}						\min&& x^2-(2y+1)^2\\
\mbox{s.t.}&& x\leq0,~y\in \arg\min\limits_{y}\{(y-1)^2: 3x-y-3\leq0,~x+y-1\leq0\}.\nonumber
\end{eqnarray}
It is shown in \cite{Li2022novel} that $(0,1)$ is a unique optimal solution of \eqref{ex-MDP-KKT<S-1} and $(0,1,0,0)$ is an S-stationary point of the corresponding MPCC.
Since the lower-level problem is a convex quadratic programming, we can equivalently transform \eqref{ex-MDP-KKT<S-1} into the corresponding MDP
\begin{eqnarray}\label{ex-MDP-KKT<S-2}				
\min&& x^2-(2y+1)^2 \nonumber\\
\mbox{s.t.} &&x\leq0, ~3x-y-3\leq0, ~x+y-1\leq0,\\			&&(y-1)^2-(z-1)^2\leq0,~u_1(3x-z-3)+u_2(x+z-1)\geq 0, \nonumber\\
&&u_1\ge 0,~u_2\geq0,~2(z-1)-u_1+u_2=0 . \nonumber
\end{eqnarray}
Next, we show that $(0,1,1,0,0)$ is not a KKT point of \eqref{ex-MDP-KKT<S-2}. Suppose by contradiction that there exists
$(\eta^G,\eta^g,\eta^u,\alpha,\beta,\gamma)\in\mathbb{R}^8$ such that
\begin{eqnarray*}
&&\eta^G+3\eta_1^g+\eta_2^g=0,~ -12-\eta_1^g+\eta_2^g=0,~2\beta=0,\nonumber\\			&&-\beta+4\gamma-\eta_1^u=0,~\beta-\eta_2^u=0,\\
&&~\eta^G\geq0,~\eta_1^g=0,~\eta_2^g\geq0,~\eta^u\geq0,~\alpha \geq0,~\gamma\geq0. \nonumber
\end{eqnarray*}
We have
\begin{eqnarray*}
\eta_1^g=0~~\Longrightarrow~~ \eta^G+\eta_2^g=0,~\eta_2^g=12~~\Longrightarrow ~~\eta^G=-12,
\end{eqnarray*}
which contradicts $\eta^G \geq0$. Therefore, $(0,1,1,0,0)$ is not a KKT point of the MDP \eqref{ex-MDP-KKT<S-2}.
\end{ex}
		
Notably, it was shown in \cite{Li2022novel} that, if $(\bar{x},\bar{y},\bar{y},\bar{u},\bar{v})$ is a KKT point of {\rm WDP}, it is also an S-stationary point of {\rm MPCC}, but the inverse is not true yet.

\section{Extension of MDP} \label{extension}
In this part, we define a new dual problem of $\mathrm{P}_x$ as
\begin{eqnarray*}\label{eMDx}
{(\mathrm{eMD}_x)}\qquad\max\limits_{z, u,v}&&f(x,z) \nonumber\\
\mbox{s.t.}&& \nabla_z L(x, z, u, v)=0,~u\geq 0, ~ u\circ g(x,z)\ge 0, ~ v\circ h(x,z)= 0,	
\end{eqnarray*}
where $\circ$ denotes the Hadamard product. Denote by $M'(x)$ the feasible region of $\mathrm{eMD}_x$ and, for any fixed $(x,u,v)$, let	
\begin{eqnarray*}
\mathcal{Z}'(x,u,v):=\{z\in \mathbb{R}^m: \nabla_z L(x,z, u,v)=0, ~ u\circ g(x,z)\ge 0,~ v\circ h(x,z)= 0\}.			
\end{eqnarray*}

For $\mathrm{P}_x$ and $\mathrm{eMD}_x$, the weak and strong duality theorems still hold; see Appendix for details. Thus, in a similar way to the proof of Theorem \ref{MDP-th}, we can show the following equivalent result.
\begin{thm}\label{eMDP-th}
Suppose that, for any given $x\in X$, $u\in \mathbb{R}^p_+$, and $v\in \mathbb{R}^q$, $f(x,\cdot)$ is pseudoconvex and $u^{T} g(x, \cdot)+v^{T} h(x, \cdot)$ is quasiconvex over $Y(x)\cup \mathcal{Z}'(x,u,v)$, and $\mathrm{P}_x$ satisfies the Guignard constraint qualification at some solution $y_x \in {S(x)}$. Then, if $(\bar{x},\bar{y})$ is an optimal solution to {\rm BP}, there exists $(\bar{z},\bar{u},\bar{v})$ such that $(\bar{x},\bar{y},\bar{z},\bar{u},\bar{v})$ is an optimal solution to
\begin{eqnarray*}
{\rm (eMDP)}\qquad\min&&F(x, y)  \nonumber\\
\mbox{\rm s.t.}&& (x,y)\in \Omega,~g(x, y) \le 0, ~h(x, y) =0,\\
&& f(x, y) - f(x, z)\leq 0, ~ u\circ g(x,z)\ge 0,~ v\circ h(x,z)= 0,\\
&&\nabla_z f(x, z) + \nabla_z g(x, z) u+ \nabla_z h(x, z) v = 0, ~u\geq 0. \nonumber
\end{eqnarray*}
Conversely, if $(\bar{x},\bar{y},\bar{z},\bar{u},\bar{v})$ is an optimal solution to {\rm eMDP}, then $(\bar{x},\bar{y})$ is an optimal solution to {\rm BP}.
\end{thm}

Note that, like MDP, eMDP may also satisfy the MFCQ at its feasible points, which can be illustrated by Example \ref{ex-MDP-MFCQ}.

\section{Numerical experiments}
\label{numerical}
In this section, we compare the new MDP and eMDP approaches with the MPCC and WDP approaches by generating some tested problems randomly. All experiments were implemented in MATLAB 9.13.0 and run on a computer with Windows 10, Intel(R) Xeon(R) E-2224G CPU $@$ 3.50GHz 3.50 GHz. All implementation codes and detailed numerical results are available in the Github repository \cite{Li2023github}.

\subsection{Test examples}
\label{Test examples}
We designed a procedure to generate the following linear bilevel program:
\begin{eqnarray}\label{linear-linear BP}	
	\min&&c_1^T x+c_2^T y\nonumber\\
	\mbox{s.t.}&&A_1x\leq b_1,\\
	&&y\in \arg\min\limits_{y}\{d_2^T y: \!A_2x+B_2y \leq b_2,b_l\leq y\leq b_u\},\nonumber
\end{eqnarray}
where all coefficients $A_1\in \mathbb{R}^{l\times n}$, $A_2\in \mathbb{R}^{p\times n}$, $B_2\in \mathbb{R}^{p\times m}$, $b_1\in \mathbb{R}^l$, $b_2\in \mathbb{R}^p$, $c_1\in \mathbb{R}^n$, and $c_2,d_2\in \mathbb{R}^m$ were generated by \emph{sprand} in $[-1,1]$ with density to be 0.5, while the lower and upper bounds were generated by $b_l=-10*ones(m,1)$ and $b_u=10*ones(m,1)$, respectively.

In our experiments, we generated 150 problems, which contain three groups by adjusting only the lower-level dimensions $m$ and $p$ because the lower-level variables and constraints are the essential factors in solving bilevel programs: The first 50 problems were generated with $\{n=20, l=30, m=60, p=50\}$, the second 50 problems were generated with $\{n=20, l=30, m=100, p=110\}$, and the last 50 problems were generated with $\{n=20, l=30, m=160, p=150\}$.

When evaluating the numerical results by different algorithms, we mainly focused on three aspects: The top priority was the feasibility of the approximation solutions to the original problem \eqref{linear-linear BP}, the second priority was to compare their upper-level objective values if their feasibilities satisfied the given accuracy requirements, and the third priority was to compare CPU times. Similarly as in \cite{Li2022novel}, we adopted the following unified criterion to measure the infeasibility to \eqref{linear-linear BP} in our tests:
\begin{eqnarray*}	{\rm{Infeasibility}}&\!\!:=\!\!&\|\max(0,A_1x^k-b_1)\|+\|\max(0, A_2x^k+B_2y^k-b_2)\|\\
&&+\|\max(0,y^k-b_u)\|+\|\max(0,b_l-y^k)\|+|d_2^{T}y^k-V(x^k)|,
\end{eqnarray*}
where $(x^k,y^k)$ is the current approximation solution and $V(x^k):=\min \{d_2^{T}y: A_2x^k+B_2y\leq b_2,~b_l\leq y\leq b_u\}$. Obviously, $(x^k,y^k)$ is feasible to \eqref{linear-linear BP} if and only if the value of \rm{Infeasibility} equals to zero.

Note that, since the lower-level problem is linear, \eqref{linear-linear BP} can be equivalently transformed into the following single-level problems:
\begin{eqnarray}\label{linear-linear MPCC}	
{\rm(MPCC)}\quad\min&&c_1^T x+c_2^T y \nonumber\\
\mbox{s.t.}&&A_1x \leq b_1, ~d_2+B_2^T u_1+u_2-u_3=0,\\
&&0\leq u_1 \perp A_2x+B_2y- b_2\leq 0,\nonumber\\
&&0\leq u_2 \perp y-b_u\leq 0, ~0\leq u_3 \perp y-b_l\geq 0. \nonumber
\end{eqnarray}
\begin{eqnarray}\label{linear-linear WDP}	
{\rm(WDP)}\quad\min &&c_1^T x+c_2^T y \nonumber\\
\mbox{s.t.}&&A_1x \leq b_1, ~d_2+B_2^T u_1+u_2-u_3=0, \\
&&d_2^T y-d_2^T z-u_1^T (A_2x+B_2z-b_2)-u_2^T (z-b_u)-u_3^T(b_l-z)\leq 0, \nonumber\\
&&A_2x+B_2y \leq b_2,~b_l\leq y\leq b_u,~ u_1,u_2,u_3\geq0.\nonumber
\end{eqnarray}
\begin{eqnarray}\label{linear-linear MDP}	
{\rm(MDP)}\quad\min&&c_1^T x+c_2^T y \nonumber\\
\mbox{s.t.}&&A_1x \leq b_1, ~d_2+B_2^T u_1+u_2-u_3=0, ~d_2^T y-d_2^T z \leq0,\\
&& u_1^T (A_2x+B_2z-b_2)+u_2^T (z-b_u)+u_3^T (b_l-z)\geq 0, \nonumber\\
&&A_2x+B_2y \leq b_2,~b_l\leq y\leq b_u,~ u_1,u_2,u_3\geq0,\nonumber
\end{eqnarray}
\begin{eqnarray}\label{linear-linear eMDP}
{\rm(eMDP)}\quad\min &&c_1^T x+c_2^T y \nonumber\\
\mbox{s.t.}&&A_1x \leq b_1, ~d_2+B_2^T u_1+u_2-u_3=0, ~d_2^T y-d_2^T z \leq0,\qquad\qquad\\
&& u_1 \circ(A_2x+B_2z-b_2)\geq 0,~u_2\circ (z-b_u)\geq 0, ~u_3\circ(b_l-z)\geq 0,\nonumber\\
&&A_2x+B_2y \leq b_2,~b_l\leq y\leq b_u,~ u_1,u_2,u_3\geq0.\nonumber
\end{eqnarray}
Similarly as in \cite{Li2022novel}, we first attempted to solve \eqref{linear-linear MPCC}-\eqref{linear-linear eMDP} directly and, as expected, the numerical results were very unsatisfactory. In fact, among 150 tested problems, none of them satisfied the feasibility accuracy to \eqref{linear-linear BP} by these direct methods; see the right-side columns of Tables C.1-C.15 in Appendix C. The only reason was that not only MPCC, but also WDP, MDP and eMDP have combinatorial structures. We need to employ some relaxation technique to improve success efficiency in solving bilevel programs.

\subsection{Relaxation schemes for MDP {and eMDP}}
\label{Comparison relaxation MDP}

Note that \eqref{linear-linear BP} has no equality constraints at lower-level. Compared with WDP, the key constraint $f(x, y) - f(x, z) - u^T g(x, z) \leq 0$ is decomposed into two constraints $f(x, y) - f(x, z)\leq 0$ and $u^T g(x, z) \geq 0$ in MDP, while the latter constraint is further decomposed into $u\circ g(x, z)\geq 0$ in eMDP. Since eMDP and MDP are structurally similar, this part is described in terms of MDP and, understandably, the analysis is generally suitable to eMDP.
	
There are three relaxation schemes for MDP, that is, the relaxation technique is applied to one of two constraints or to both of them. In the case of \eqref{linear-linear BP}, the relaxation problems are respectively
\begin{eqnarray*}
\mathrm{MDP_1}(t)\qquad\min&&c_1^T x+c_2^T y \nonumber\\
\mbox{s.t.}&&A_1x \leq b_1, ~d_2+B_2^T u_1+u_2-u_3=0, ~d_2^T y-d_2^T z\leq t,\\
&&u_1^T(A_2x+B_2z-b_2)+u_2^T(z-b_u)+u_3^T(b_l-z)\geq 0, \nonumber\\
&&A_2x+B_2y \leq b_2,~b_l\leq y\leq b_u,~ u_1,u_2,u_3\geq0,\nonumber
\end{eqnarray*}
\begin{eqnarray*}
\mathrm{MDP_2}(t)\qquad\min &&c_1^T x+c_2^T y \nonumber\\
\mbox{s.t.}&&A_1x \leq b_1, ~d_2+B_2^T u_1+u_2-u_3=0, ~d_2^T y-d_2^T z\leq 0,\\	&&u_1^T(A_2x+B_2z-b_2)+u_2^T(z-b_u)+u_3^T(b_l-z)\geq -t, \nonumber\\
&&A_2x+B_2y \leq b_2,~b_l\leq y\leq b_u,~ u_1,u_2,u_3\geq0,\nonumber
\end{eqnarray*}
\begin{eqnarray*}
\mathrm{MDP_3}(t)\qquad\min&&c_1^T x+c_2^T y \nonumber\\
\mbox{s.t.}&&A_1x \leq b_1, ~d_2+B_2^T u_1+u_2-u_3=0, ~d_2^T y-d_2^T z\leq t,\\	&&u_1^T(A_2x+B_2z-b_2)+u_2^T(z-b_u)+u_3^T(b_l-z)\geq -t, \nonumber\\
&&A_2x+B_2y \leq b_2,~b_l\leq y\leq b_u,~ u_1,u_2,u_3\geq0,\nonumber
\end{eqnarray*}
where $t>0$ is a relaxation parameter. One natural question is which is more effective. To answer this question, we implemented Algorithm \ref{alg:MDP-Relaxed} given below to evaluate the above relaxation schemes by solving the tested problems.

\begin{algorithm}
	\caption{}
	\label{alg:MDP-Relaxed}
	
	\textbf{Step 0}  Choose an initial point $\tilde{x}^0$ such that $A_1\tilde{x}^0 \leq b_1$,
	an initial relaxation parameter $t_0>0$, an update parameter $\sigma\in(0,1)$, and accuracy tolerances $\{\epsilon_{\rm \scriptscriptstyle SQP},\epsilon_{\rm r}\}$. Set $k:=0$.
	
	\textbf{Step 1}
	Solve the lower-level problem in \eqref{linear-linear BP} with $x=\tilde{x}^k$ by \emph{linprog} to get $\tilde{y}^k$ and $\tilde{u}^k$. Set $\tilde{w}^k:=(\tilde{x}^k,\tilde{y}^k,\tilde{y}^k,\tilde{u}^k)$.
	
	\textbf{Step 2} Solve MDP$(t_k)$ by \emph{fmincon} with $\tilde{w}^k$ as  starting point and $\epsilon_{\rm \scriptscriptstyle SQP}$ as accuracy tolerance to obtain an iterative $w^k:=(x^k,y^k,z^k,u^k)$. If $w^k$ satisfies some termination criterion, stop. Otherwise, go to Step 3.
	
	\textbf{Step 3}
	Set $\tilde{x}^{k+1}:={x}^k$ and $t_{k+1}:=\max\{\sigma t_k, \epsilon_{\rm r}\}$. Return to Step 1 with $k:=k+1$.
\end{algorithm}

The point $\tilde{w}^k$ generated by Algorithm \ref{alg:MDP-Relaxed} is obviously feasible to \eqref{linear-linear MDP} as long as the lower-level problem in \eqref{linear-linear BP} with $x=\tilde{x}^k$ is solvable. In the case where the lower-level problem had no solution, we chose $\tilde{y}^k=0$ and $\tilde{u}^k=0$. We set $t_0=0.1$, $\sigma=0.5$, $\epsilon_{\rm r}=10^{-8}$, and $\epsilon_{\rm \scriptscriptstyle SQP}=10^{-16}$. In addition, the termination criterion in Step 2 was either $t_k\leq\epsilon_{\rm r}$ or
\begin{eqnarray*}
&& |d_2^T y^k-d_2^T z^k|\leq\epsilon_{\rm r}~~{\rm for~MDP_1}(t_k), \\
&&|(u_1^k)^T(A_2x^k+B_2z^k-b_2)+(u_2^k)^T(z^k-b_u)+(u_3^k)^T(b_l-z^k)|\leq \epsilon_{\rm r}~~{\rm for~MDP_2}(t_k),\\
&&|d_2^T y^k-d_2^T z^k|\leq\epsilon_{\rm r},~ |(u_1^k)^T(A_2x^k+B_2z^k-b_2)+(u_2^k)^T(z^k-b_u)+(u_3^k)^T(b_l-z^k)|\leq \epsilon_{\rm r}~~{\rm for~MDP_3}(t_k).
\end{eqnarray*}

Numerical results for the tested problems by three relaxation schemes are shown in Tables C.1-C.15 in Appendix C, where ``\#'' represents the example serial number, ``ObjVal'' represents the upper-level objective value at the approximation solution, and ``Time'' represents the average CPU time (in seconds) by running 3 times to solve the same problem.

Comparison results of three relaxation schemes for MDP are summarized in Table \ref{tab:Comparison of three relaxation schemes}, where ``Number of feasible cases" denotes the number of problems satisfying Infeasibility~$<10^{-3}$ among 50 tested problem and ``Average time'' represents the average CPU time for 50 problems.

\begin{table}[htbp]
	\centering
	\caption{Comparison of three relaxation schemes MDP$_1$, MDP$_2$, and MDP$_3$}
	\vspace{2mm}
	\begin{tabular}{c|ccc}
		\hline	
		Dimension  & Scheme & Number of feasible cases & {Average time}\\[1mm]
		\hline
		\multirow{3}[0]{*}{$n=20, l=30,  m=60, p=50$}
		& MDP$_1$  & 18    & 17.71  \\[1mm]
		& MDP$_2$  & 8     & 49.71  \\[1mm]
		& MDP$_3$  & 5     & 59.52  \\[1mm]
		\cmidrule(r){1-4}
		\multirow{3}[0]{*}{$n=20, l=30, m=100, p=110$}
		& MDP$_1$  & 19    & 80.30  \\[1mm]
		& MDP$_2$  & 16    & 149.82 \\[1mm]
		& MDP$_3$  & 12    & 161.81 \\[1mm]
		\cmidrule(r){1-4}
		\multirow{3}[0]{*}{$n=20, l=30, m=160, p=150$}
		& MDP$_1$  & 20    & 409.05 \\[1mm]
		& MDP$_2$  & 7     & 508.79 \\[1mm]
		& MDP$_3$  & 12    & 480.77 \\
		\hline
	\end{tabular}
	\label{tab:Comparison of three relaxation schemes}%
\end{table}

It can be seen from Table \ref{tab:Comparison of three relaxation schemes} that MDP$_1$ was notably better than the others in terms of the feasibility to the original bilevel programs and the average CPU time. Based on this observation, in order to compare the efficiency of the MDP approach with the MPCC and WDP approaches, we use the relaxation scheme MDP$_1(t)$ to approximate MDP in the next subsection. Since eMDP is similar to MDP, it is natural to use
\begin{eqnarray*}
\mathrm{eMDP}(t)\qquad\min&&c_1^T x+c_2^T y \\
\mbox{s.t.}&&A_1x \leq b_1, ~d_2+B_2^T u_1+u_2-u_3=0, ~d_2^T y-d_2^T z \leq t,\\
&& u_1 \circ(A_2x+B_2z-b_2)\geq 0,~u_2\circ (z-b_u)\geq 0, ~u_3\circ(b_l-z)\geq 0,\\
&&A_2x+B_2y \leq b_2,~b_l\leq y\leq b_u,~ u_1,u_2,u_3\geq0
\end{eqnarray*}
to approximate eMDP.

For completeness, we give a comprehensive convergence analysis for the relaxation {schemes MDP$_1$ and $\mathrm{eMDP}(t)$} in {Online Appendix}.

\subsection{Comparison of {MPCC, WDP, MDP, and eMDP}}

As stated in Subsection \ref{Test examples}, direct methods were very unsatisfactory in solving the tested problems. In this subsection, we introduce our numerical experiments on relaxation methods for MDP, eMDP, MPCC, and WDP. Our purpose is to evaluate which is more effective.

Consider the linear bilevel program \eqref{linear-linear BP}. Given a relaxation parameter $t>0$, we use MDP$_1(t)$ to approximate the MDP \eqref{linear-linear MDP}, use eMDP$(t)$ to approximate the eMDP \eqref{linear-linear eMDP}, use
\begin{eqnarray*}
\mathrm{MPCC}(t)\qquad\min &&c_1^T x+c_2^T y \nonumber\\
\mbox{s.t.}&&A_1x \leq b_1,~d_2+B_2^T u_1+u_2-u_3=0,\nonumber\\
&&u_1^T (A_2x+B_2y-b_2)+u_2^T (y-b_u)+u_3^T (b_l-y)\leq t,\qquad\qquad\nonumber\\
&&A_2x+B_2y \leq b_2,~b_l\leq y\leq b_u,~ u_1,u_2,u_3\geq0\nonumber
\end{eqnarray*}
to approximate the MPCC \eqref{linear-linear MPCC}, and use
\begin{eqnarray*}
\mathrm{WDP}(t)\qquad\min &&c_1^T x+c_2^T y \nonumber\\
\mbox{s.t.}&&A_1x \leq b_1, ~d_2+B_2^{T} u_1+u_2-u_3=0, \\
&&d_2^T y-d_2^T z-u_1^T (A_2x+B_2z-b_2)-u_2^T (z-b_u)
	-u_3^T (b_l-z)\leq t, \nonumber\\
&&A_2x+B_2y \leq b_2,~b_l\leq y\leq b_u,~ u_1,u_2,u_3\geq0\nonumber
\end{eqnarray*}
to approximate the WDP \eqref{linear-linear WDP}.

In our experiments, we implemented Algorithm \ref{alg:MDP-Relaxed} to solve MDP by relaxation method. Moreover, by replacing MDP$(t_k)$ with eMDP$(t_k)$ and updating the termination criterion to either $t_k\leq\epsilon_{\rm r}$ or $|d_2^T y^k-d_2^T z^k|\leq\epsilon_{\rm r}$ in Step 2, Algorithm \ref{alg:MDP-Relaxed} was used to solve eMDP  by relaxation method. In addition, by replacing MDP$(t_k)$ with WDP$(t_k)$ and updating the termination criterion to either $t_k\leq\epsilon_{\rm r}$ or
$$|d_2^T y^k-d_2^T z^k-(u_1^k)^T (A_2x^k+B_2z^k-b_2)-(u_2^k)^T (z^k-b_u)-(u_3^k)^T (b_l-z^k)|\leq\epsilon_{\rm r}$$
in Step 2, Algorithm \ref{alg:MDP-Relaxed} was used to solve WDP  by relaxation method. We made similar modification on Algorithm \ref{alg:MDP-Relaxed} to solve MPCC by relaxation method and particularly, the termination criterion was updated to either $t_k\leq\epsilon_{\rm r}$ or
$$|(u_1^k)^T (A_2x^k+B_2y^k-b_2)+(u_2^k)^T (y^k-b_u)+(u_3^k)^T (b_l-y^k)|\leq\epsilon_{\rm r}.$$ The settings of parameters were the same as in Subsection \ref{Comparison relaxation MDP}, that is, $t_0=0.1$, $\sigma=0.5$, $\epsilon_{\rm r}=10^{-8}$, and $\epsilon_{\rm \scriptscriptstyle SQP}=10^{-16}$.

\begin{table}[htbp]
	\centering
	\caption{Comparison of MDP {and eMDP} with MPCC and WDP }
	\vspace{2mm}
	\begin{tabular}{c|ccc}
		\hline
		Dimension & Method & Number of dominant cases & {Average time} \\[1mm]
		\hline
		\multirow{4}[0]{*}{$n=20, l=30, m=60, p=50$ }
		& MPCC  & 7   & 74.55  \\[1mm]
		& WDP   & 6   & 54.12  \\[1mm]
		& MDP   & 13  & 17.71  \\[1mm]
		& eMDP  & 28  & 27.65  \\[1mm]
		\cmidrule(r){1-4}
		\multirow{4}[0]{*}{$n=20, l=30, m=100, p=110$}
		& MPCC  & 6   & 145.05 \\[1mm]
		& WDP   & 12  & 107.69 \\[1mm]
		& MDP   & 15  & 80.30  \\[1mm]
		& eMDP  & 29  & 109.56  \\[1mm]
		\cmidrule(r){1-4}
		\multirow{4}[0]{*}{$n=20, l=30, m=160, p=150$}
		& MPCC  & 10  & 434.91 \\[1mm]
		& WDP   & 14  & 637.83 \\[1mm]
		& MDP   & 12  & 409.05 \\[1mm]
		& eMDP  & 36  & 117.63  \\
		\hline
	\end{tabular}
	\label{tab:Comparison of MPCC, WDP, MDP}
\end{table}

Numerical results by relaxation methods for the tested problems are shown in the left-side columns of Tables C.1-C.15 in Appendix C. Comparison results of four approaches are summarized in Table \ref{tab:Comparison of MPCC, WDP, MDP}, where the dominant cases mean that the corresponding approach obtained the best or one of the best approximation solutions satisfying Infeasibility~$<10^{-3}$ by comparing their final objective values. Note that {\rm{``Infeasibility"}} is an output result of the algorithms, which is the top priority criterion for measuring each algorithm's performance, rather than a termination condition. Speaking briefly, {\rm{``Infeasibility"}} is a criterion to measure the infeasibility of the approximation solutions obtained by each algorithm to the original linear bilevel program \eqref{linear-linear BP}.

From Table \ref{tab:Comparison of MPCC, WDP, MDP}, it can be observed that, at least in our tests, the MDP approach was better than the MPCC and WDP approaches, while the eMDP approach was far superior to all other three approaches. In particular, the total numbers of dominant cases for MPCC, WDP, MDP, and eMDP are 23, 32, 40, and 93, respectively. Moreover, in most cases, the MDP and eMDP approaches took shorter average CPU times, which may result in significant computational cost savings, especially for large-scale problems.

One main reason why the MDP and eMDP approaches performed better in our tests may be because eMDP, MDP and WDP have better properties than MPCC, while eMDP has tighter feasible region than MDP, and MDP has tighter feasible region than WDP so that the feasibility to the original bilevel program is easier to achieve. To some extent, this may be verified from the experimental results shown in Tables C.1-C.15 in Appendix C. In fact, the total numbers of feasible cases for MPCC, WDP, MDP, and eMDP are 32, 42, 57, and 128, respectively, which are not much different from the total numbers of dominant cases. In addition, as stated in Section \ref{intro}, MDP has a merit that the constraint $f(x, y) - f(x, z)\leq 0$ may be simplified when the lower-level objective is composed of a strictly monotone one-dimensional function $f(t)$ with $t=r(x,y)$. For such cases, introducing preprocessing procedure may improve the success efficiency of the MDP approach. We would like to leave this work together with some practical applications as a future task.

\medskip
\section{Conclusions}
\label{conclusion}
Motivated by the recent work \cite{Li2022novel}, we applied the lower-level Mond-Weir duality to present the new MDP reformulation for the bilevel program BP. Under mild assumptions, their equivalence was shown in globally or locally optimal sense. Similar to the WDP reformulation studied in \cite{Li2022novel}, MDP may also satisfy the MFCQ at its feasible points, which is different from the well-known MPCC reformulation that does not satisfy the MFCQ at any feasible point. Moreover, we investigated the relations among these reformulations. We further extend the MDP reformulation to present the new eMDP reformulation, which has similar properties to MDP. Furthermore, in order to compare the new MDP and eMDP approaches with the MPCC and WDP approaches, we designed a procedure to generate 150 tested problems randomly and our numerical experiments showed that, at least in our tests, MDP has evident advantages over MPCC and WDP in terms of feasibility to the original bilevel programs, success efficiency, and average CPU time, while eMDP is far superior to all other three reformulations.

MDP, eMDP, and WDP not only have theoretical advantages, but also have computational advantages over MPCC. This inspires us to further develop more effective algorithms for bilevel programs by means of lower-level duality. In particular, we may expect to apply some active-set identification techniques, like in \cite{Lin2006hybrid}, to the separate constraints $u\circ g(x,z)\geq0$ in eMDP to develop some new algorithms.
In addition, we may apply the new approaches to practical problems in economy, management science, and so on.

\bigskip
\bigskip
\vspace{4pt} \noindent{\bf Acknowledgement.}
The authors are grateful to Prof. Masao Fukushima whose helpful comments and suggestions have led to much improvement of the paper. All authors contributed equally to this work and are listed in alphabetical order.

%
%

\bigskip

\bigskip

\begin{appendices}

\setcounter{equation}{0}
\renewcommand\theequation{A.\arabic{equation}}
\setcounter{thm}{0}
\renewcommand\thethm{A.\arabic{thm}}
\setcounter{defi}{0}
\renewcommand\thedefi{A.\arabic{defi}}
\setcounter{table}{0}
\renewcommand\thetable{B.\arabic{table}}

\section*{Appendix A: Duality theorem for $\mathrm{P}_x$ and $\mathrm{eMD}_x$}
\begin{thm}\label{eMD-dualityth}
Suppose that, for any given $x\in X$,  $u\in \mathbb{R}^p_+$, and $v\in \mathbb{R}^q$, $f(x,\cdot)$ is pseudoconvex and $u^{T} g(x, \cdot)+v^{T} h(x, \cdot)$ is quasiconvex over $Y(x)\cup \mathcal{Z}'(x,u,v)$. Then, the weak duality holds, i.e.,
\begin{eqnarray}\label{eMDweakduality}
\min\limits_{y\in Y(x)} f(x,y) \geq \max\limits_{(z, u, v)\in M'(x)} f(x, z).
\end{eqnarray}
\noindent Moreover, if $\mathrm{P}_x$ satisfies the Guignard constraint qualification at some solution $y_x \in {S(x)}$, the strong duality holds, i.e.,
\begin{eqnarray*}
\min\limits_{y\in Y(x)} f(x,y) = f(x,y_x)
			= \max\limits_{(z, u, v)\in M'(x)} f(x,z).
\end{eqnarray*}
\end{thm}

\begin{proof}
We first show the weak duality. In fact, for any $y\in Y(x)$ and $(z,u,v)\in M'(x)$, we have
\begin{eqnarray}\label{eMDP-th-1}
u^T g(x, y)+v^T h(x, y)-(u^T g(x, z)+v^T h(x, z))\leq0.
\end{eqnarray}
Since $u^{T} g(x, \cdot)+v^T h(x, \cdot)$ is quasiconvexity,  \eqref{eMDP-th-1} implies
\begin{eqnarray}\label{eMDP-th-2}
(y-z)^T(\nabla_zg(x, z)u+\nabla_zh(x, z)v)\leq 0.
\end{eqnarray}
Since $\nabla_z f(x, z) + \nabla_z g(x, z) u + \nabla_z h(x, z) v= 0$, it follows that
\begin{eqnarray}\label{eMDP-th-3}
(y-z)^T(\nabla_zf(x, z)+\nabla_zg(x, z)u + \nabla_z h(x, z) v )=0.
\end{eqnarray}
Based on \eqref{eMDP-th-2} and \eqref{eMDP-th-3}, we have
\begin{eqnarray*}
(y-z)^T\nabla_zf(x, z)=-(y-z)^T(\nabla_zg(x, z)u+ \nabla_z h(x, z) v ) \geq0.
\end{eqnarray*}
Since $f(x,\cdot)$ is pseudoconvex, $f(x,y) \geq f(x, z)$ holds.
By the arbitrariness of $y\in Y(x)$ and $(z,u,v)\in M'(x)$, we have
\begin{eqnarray}\label{eMDweakduality}
\min\limits_{y\in Y(x)} f(x,y)  \geq \max\limits_{(z, u,v)\in M'(x)} f(x, z).
\end{eqnarray}

Now we show the strong duality. Since $\mathrm{P}_x$ satisfies the Guignard constraint qualification at $y_x \in {S}(x)$, there exists $(u_x, v_x )\in\mathbb{R}^{p+q}$ such that
\begin{eqnarray*}
\nabla_y L(x,y_x,u_x,v_x)=0,~ u_x^T g(x,y_x)=0, ~u_x\geq 0,~g(x,y_x)\leq0,~h(x,y_x)=0,
\end{eqnarray*}
which implies $(y_x,u_x,v_x)\in M'(x)$.
Then, we have
\begin{eqnarray*}
\min\limits_{y\in Y(x)} f(x,y)  \geq \max\limits_{(z, u,v)\in M'(x)} f(x, z)\geq f(x,y_x).
\end{eqnarray*}
Since  $y_x \in {S}(x)$, the above inequalities are all equalities.
As a result, we have
\begin{eqnarray*}
\min\limits_{y\in Y(x)} f(x,y) = f(x,y_x)= \max\limits_{(z, u,v)\in M'(x)} f(x,z).
\end{eqnarray*}
This completes the proof.
\end{proof}

\setcounter{equation}{0}
\renewcommand\theequation{B.\arabic{equation}}
\setcounter{thm}{0}
\renewcommand\thethm{B.\arabic{thm}}
\setcounter{defi}{0}
\renewcommand\thedefi{B.\arabic{defi}}
\setcounter{table}{0}

\section*{Appendix B: Convergence analysis for relaxation schemes}
Although MDP and eMDP may satisfy the MFCQ at their feasible points, numerical experiments reveal that {they are} still difficult to solve directly. In this part, we give convergence analysis for the following relaxation schemes:
\begin{eqnarray*}
\mathrm{MDP}(t)\qquad\min&&F(x, y) \nonumber \\
		\mbox{\rm s.t.}&&(x,y)\in \Omega,~g(x, y) \le 0, ~h(x, y)=0,\\
		&&f(x, y) - f(x, z)\leq t,~ u^T g(x, z) + v^T h(x,z)\geq0,\nonumber\\
		&&\nabla_z f(x, z) + \nabla_z g(x, z) u + \nabla_z h(x, z) v = 0, ~u\geq 0\nonumber
\end{eqnarray*}
and
\begin{eqnarray*}
\mathrm{eMDP}(t)\qquad\min&&F(x, y) \nonumber \\
\mbox{\rm s.t.} &&(x,y)\in \Omega,~g(x, y) \le 0,~h(x, y) = 0,\\
&&f(x, y) - f(x, z)\leq t,~ u\circ g(x, z) \geq0,~ v\circ h(x, z) =0,\nonumber\\
&&\nabla_z f(x, z) + \nabla_z g(x, z) u + \nabla_z h(x, z) v = 0, ~u\geq 0,\nonumber
\end{eqnarray*}
where $t>0$ is a relaxation parameter. Other relaxation schemes introduced in Subsection 7.2 can be analyzed similarly. The following convergence result related to globally optimal solutions is evident.

\begin{thm}
Let $\{t_k\}\downarrow0$ and $(x^k,y^k,z^k,u^k,v^k)$ be a globally optimal solution of ${\mathrm{MDP}(t_k)}$ ({\it or}  ${\mathrm{eMDP}(t_k)}$) for each $k$. Then, every accumulation point of $\{(x^k,y^k,z^k,u^k,v^k)\}$ is globally optimal to ${\rm MDP}$ ({\it or} ${\rm eMDP}$).
\end{thm}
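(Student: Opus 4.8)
The plan is to run the standard relaxation argument, exploiting the two structural facts that the relaxed constraint enters only through the single inequality $f(x,y)-f(x,z)\le t$ and that increasing $t$ \emph{enlarges} the feasible region. Fix an accumulation point $(\bar x,\bar y,\bar z,\bar u,\bar v)$ and, passing to a subsequence if necessary, assume the whole sequence $\{(x^k,y^k,z^k,u^k,v^k)\}$ converges to it.

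First I would verify that the limit is feasible to MDP. Each iterate is feasible to $\mathrm{MDP}(t_k)$, so it satisfies $(x^k,y^k)\in\Omega$, $g(x^k,y^k)\le 0$, $h(x^k,y^k)=0$, $f(x^k,y^k)-f(x^k,z^k)\le t_k$, $(u^k)^Tg(x^k,z^k)+(v^k)^Th(x^k,z^k)\ge 0$, $\nabla_z f(x^k,z^k)+\nabla_z g(x^k,z^k)u^k+\nabla_z h(x^k,z^k)v^k=0$, and $u^k\ge 0$. Letting $k\to\infty$ and using the continuity of $f,g,h$ and their gradients together with the closedness of $\Omega$, each of these passes to the limit; crucially, since $t_k\downarrow 0$, the relaxed inequality $f(x^k,y^k)-f(x^k,z^k)\le t_k$ yields $f(\bar x,\bar y)-f(\bar x,\bar z)\le 0$ in the limit. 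Hence $(\bar x,\bar y,\bar z,\bar u,\bar v)$ is feasible to MDP.

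Next I would establish global optimality by comparing against an arbitrary MDP-feasible point $(x^*,y^*,z^*,u^*,v^*)$. The key observation is that, because $f(x^*,y^*)-f(x^*,z^*)\le 0\le t_k$ for every $k$, this point is also feasible to each $\mathrm{MDP}(t_k)$; that is, the feasible region of MDP is contained in that of $\mathrm{MDP}(t_k)$. The global optimality of $(x^k,y^k,z^k,u^k,v^k)$ for $\mathrm{MDP}(t_k)$ then gives $F(x^k,y^k)\le F(x^*,y^*)$ for all $k$. Passing to the limit and using continuity of $F$ yields $F(\bar x,\bar y)\le F(x^*,y^*)$. Since $(x^*,y^*,z^*,u^*,v^*)$ was an arbitrary feasible point of MDP and the limit is itself feasible to MDP, this proves that $(\bar x,\bar y,\bar z,\bar u,\bar v)$ is globally optimal to MDP.

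There is no genuine obstacle here, which is why the statement is labelled evident; the only points requiring a little care are the direction of the set inclusion (raising $t$ enlarges rather than shrinks the feasible set, so MDP-feasible points remain admissible competitors for every $\mathrm{MDP}(t_k)$) and the fact that feasibility of the limit relies on $t_k\to 0$ tightening the relaxed constraint back to the original one. Implicitly one also uses that MDP is feasible so that comparison points exist, and that $\Omega$ is closed so that the limit remains in $\Omega$.
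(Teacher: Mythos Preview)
Your argument is correct and is precisely the standard relaxation argument the paper has in mind; the paper itself does not give a proof but simply declares the result ``evident,'' and your two steps (feasibility of the limit via continuity and $t_k\downarrow 0$, then optimality via the inclusion of the MDP feasible set in each $\mathrm{MDP}(t_k)$ feasible set) are exactly what justifies that claim. One small remark: you do not need to assume separately that MDP is feasible, since the feasibility of the limit point already establishes this.
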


Since ${\mathrm{MDP}(t)}$ and ${\mathrm{eMDP}(t_k)}$ are usually nonconvex optimization  problems, it is necessary to study the limiting properties of their stationary points. To this end, the following constraint qualification is needed.

\begin{defi}\rm  \cite{Qi2000constant}
\label{CPLD}
We say that the problem {\rm P} in Section 2 satisfies the constant positive linear dependence (CPLD) at a feasible point $\bar{y}$ if, for any $I_0\subseteq \{ i : g_i(\bar{y})=0\}$ and $J_0\subseteq \{1,\cdots, q\}$ such that the gradients $\{\nabla g_i(\bar{y}), i\in I_0\}\cup\{\nabla h_j(\bar{y}), j\in J_0\}$ are positive-linearly dependent, there exists a neighborhood $\bar{U}$ of $\bar{y}$ such that $\{\nabla g_i(y), i\in I_0; ~\nabla h_j(y), j\in J_0\}$ are linearly dependent for any $y\in \bar{U}$.
\end{defi}

\begin{thm}\label{CPLDth}
If $\mathrm{MDP}(t)$ ({\it or} $\mathrm{eMDP}(t)$) satisfies the CPLD at a feasible point $(\bar{x},\bar{y},\bar{z},\bar{u},\bar{v})$,
there exists a neighborhood $\bar{U}$ of $(\bar{x},\bar{y},\bar{z},\bar{u},\bar{v})$ such that $\mathrm{MDP}(t)$ ({\it or} $\mathrm{eMDP}(t)$) satisfies the CPLD at any feasible point in $\bar{U}$.
\end{thm}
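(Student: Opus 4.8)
The plan is to reduce the statement to two elementary stability facts and then feed everything into the definition of CPLD at the single point $\bar{w}:=(\bar{x},\bar{y},\bar{z},\bar{u},\bar{v})$. Write $w:=(x,y,z,u,v)$ and collect all inequality constraints of $\mathrm{MDP}(t)$ into functions $G_i$ and all equality constraints into functions $H_j$. Because $f,g,h$ are twice continuously differentiable, every $G_i$ and $H_j$ is continuously differentiable in $w$ -- this includes the stationarity constraint $\nabla_z f+\nabla_z g\,u+\nabla_z h\,v=0$, whose gradient in $w$ involves only the (continuous) second derivatives of $f,g,h$ -- so all constraint gradients depend continuously on $w$. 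The only structural input from $\mathrm{MDP}(t)$ is this $C^1$ smoothness; the claim is really the general propagation property of CPLD for smooth nonlinear programs. I would first record active-set upper semicontinuity: if $G_i(\bar{w})<0$ then $G_i(w)<0$ for $w$ near $\bar{w}$, so on a suitable neighborhood the active set satisfies $A(w)\subseteq A(\bar{w})$. Hence, when verifying CPLD at a nearby $w$, it suffices to consider index pairs $(I_0,J_0)$ with $I_0\subseteq A(\bar{w})$, of which there are only finitely many.

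The key lemma I would prove is that positive-linear \emph{independence} of a fixed family of gradients is an open condition. Fix $(I_0,J_0)$ and suppose $\{\nabla G_i(\bar{w}),i\in I_0\}\cup\{\nabla H_j(\bar{w}),j\in J_0\}$ is positive-linearly independent. If this failed to persist, there would exist $w^k\to\bar{w}$ and scalars $\lambda_i^k\ge0$, $\mu_j^k$, not all zero, with $\sum_{i\in I_0}\lambda_i^k\nabla G_i(w^k)+\sum_{j\in J_0}\mu_j^k\nabla H_j(w^k)=0$. Normalizing by $\sum_{i\in I_0}\lambda_i^k+\sum_{j\in J_0}|\mu_j^k|=1$, extracting a convergent subsequence, and passing to the limit using continuity of the gradients would produce a nonzero combination with nonnegative $\lambda$-coefficients that vanishes at $\bar{w}$, contradicting positive-linear independence there.

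With these tools in hand the proof splits by pair. For each $(I_0,J_0)$ with $I_0\subseteq A(\bar{w})$, either the gradients at $\bar{w}$ are positive-linearly dependent, in which case Definition~\ref{CPLD} applied at $\bar{w}$ furnishes a neighborhood on which these gradients are linearly dependent at \emph{every} point, so the CPLD conclusion is automatically available at any $w$ in that open neighborhood; or they are positive-linearly independent, in which case the lemma above gives a neighborhood on which they remain positive-linearly independent, so this pair never triggers the CPLD hypothesis at nearby $w$. Intersecting these finitely many neighborhoods with the active-set neighborhood yields a single $\bar{U}$. Then, for any feasible $w\in\bar{U}$ and any triggering pair $(I_0,J_0)$ -- necessarily $I_0\subseteq A(w)\subseteq A(\bar{w})$ with the gradients positive-linearly dependent at $w$ -- the independent alternative is excluded, so we are in the dependent case and linear dependence of the gradients holds throughout a neighborhood of $w$; hence CPLD holds at $w$.

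I expect the main obstacle to be the limiting argument in the lemma. The normalization $\sum_{i\in I_0}\lambda_i^k+\sum_{j\in J_0}|\mu_j^k|=1$ is precisely what confines the multiplier sequence to a compact set and prevents it from collapsing to zero, while the sign constraints $\lambda_i^k\ge0$ must be shown to survive passage to the limit so that the limiting relation is again a genuine positive-linear dependence (rather than a mere linear dependence). The remaining ingredients -- active-set upper semicontinuity, finiteness of the index pairs, and the observation that the CPLD conclusion already asserts linear dependence on a full open neighborhood -- are routine.
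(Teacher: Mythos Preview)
Your proposal is correct and follows essentially the same route as the paper: both hinge on the same dichotomy for each index pair $(I_0,J_0)$ -- either the gradients are positive-linearly dependent at $\bar{w}$ (and CPLD at $\bar{w}$ supplies the neighborhood of linear dependence) or they are positive-linearly independent (which is an open condition and hence never triggers the hypothesis nearby) -- together with finiteness of the admissible pairs. The only cosmetic differences are that the paper packages this as a contradiction argument and quotes \cite[Proposition~2.2]{Qi2000constant} for the openness of positive-linear independence, whereas you construct $\bar{U}$ directly and prove that lemma yourself via the normalization-and-limit argument.
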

	
\begin{proof}
It is sufficient to show the case of MDP, since the case of eMDP can be shown in a similar way.
For simplicity, we take the upper constraint $(x,y)\in \Omega$ away from $\mathrm{MDP}(t)$ and denote by $\bar{w}:=(\bar{x},\bar{y},\bar{z},\bar{u},\bar{v})$, $w:=(x,y,z,u,v)$. We further denote by $G_i(w) ~(i=0,1,\cdots,2p+1)$ all inequality constraint functions 
and by $H_i(w) ~(i=1,\cdots,q+m)$ all equality constraint functions.
	
Suppose by contradiction that the conclusion is wrong. That is, there is a sequence $\{w^k\}$ of feasible points tending to $\bar{w}$ such that $\mathrm{MDP}(t)$ does not satisfy the CPLD at each $w^k$. Violation of CPLD means that there exist $I_0^k\subseteq \{i : G_i(w^k)=0\}$ and $J_0^k\subseteq \{1,\cdots,q+m\}$  such that $\{\nabla G_i(w^k), i\in I_0^k\}\cup\{\nabla H_j(w^k), j\in J_0^k\}$ are positive-linearly dependent, but linearly independent in some points arbitrarily close to $w^k$. Note that the number of subsets of the index sets $\{i : G_i(w^k)=0\}$ and $\{1,\cdots,q+m\}$
is finite. Taking a subsequence if necessary, we may assume $I_0^k\equiv I_0$ and $J_0^k\equiv J_0$. Therefore, there is another sequence $\{\tilde{w}^k\}$ tending to $\bar{w}$ such that $\{\nabla G_i(\tilde{w}^k), i\in I_0; ~\nabla H_j(\tilde{w}^k), j\in J_0\}$ is linearly independent.
	
If $\{\nabla G_i(\bar{w}), i\in I_0\}\cup\{\nabla H_j(\bar{w}), j\in J_0\}$ is positive-linearly dependent, by the CPLD assumption, there exists a neighborhood which remains the linear dependence, which contradicts the existence of $\{\tilde{w}^k\}$. If $\{\nabla G_i(\bar{w}), i\in I_0\}\cup\{\nabla H_j(\bar{w}), j\in J_0\}$ is positive-linearly independent, by Proposition 2.2 in \cite{Qi2000constant},
this property should remain in a neighborhood, which contradicts the existence of $\{w^k\}$. As a result, the conclusion is true. This completes the proof.
\end{proof}
	
Now, we give the main convergence result for the relaxation schemes MDP$(t)$ {and $\mathrm{eMDP}(t)$}.
	
\begin{thm}
Let $\{t_k\}\downarrow0$ and $(x^k,y^k,z^k,u^k,v^k)$ be a KKT point of ${\mathrm{MDP}(t_k)}$ ({\it or} $\mathrm{eMDP}(t_k)$) for each $k$. Assume that $(\bar{x},\bar{y},\bar{z},\bar{u},\bar{v})$ is an accumulation point of $\{(x^k,y^k,z^k,u^k,v^k)\}$. If {\rm MDP} ({\it or} {\rm eMDP}) satisfies the CPLD at $(\bar{x},\bar{y},\bar{z},\bar{u},\bar{v})$, then $(\bar{x},\bar{y},\bar{z},\bar{u},\bar{v})$ is a KKT point of ${\rm MDP}$ ({\it or} {\rm eMDP}).
\end{thm}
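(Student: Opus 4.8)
The plan is to follow the standard template for convergence of stationary points under a constant positive linear dependence hypothesis, adapted to the fact that only a single constraint is relaxed. First I would write out the KKT system of $\mathrm{MDP}(t_k)$ at $w^k:=(x^k,y^k,z^k,u^k,v^k)$, with its multiplier vector, in the same form as the stationarity and complementarity conditions recorded earlier (compare the system in the proof of Theorem \ref{KKT=S} and the abnormal-multiplier set in Theorem \ref{MDP-MFCQ-Fail}). The crucial observation is that replacing $f(x,y)-f(x,z)\le 0$ by $f(x,y)-f(x,z)\le t_k$ leaves the gradient of that constraint unchanged, so the stationarity equations of $\mathrm{MDP}(t_k)$ have exactly the gradient structure of those of MDP; only the right-hand side of that one constraint, and hence its complementarity relation, carries $t_k$. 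Since $t_k\downarrow 0$ and $w^k\to\bar{w}:=(\bar{x},\bar{y},\bar{z},\bar{u},\bar{v})$, continuity of all constraint functions together with $f(x^k,y^k)-f(x^k,z^k)\le t_k\to 0$ shows that $\bar{w}$ is feasible to MDP.

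Next I would control the multipliers. Using the notation $G_i$, $H_j$ for the inequality and equality constraint functions, each stationarity relation expresses the (negative) objective gradient, plus the fixed inner-gradient terms, as a linear combination of the gradients $\{\nabla G_i(w^k)\}$ of the active constraints and $\{\nabla H_j(w^k)\}$, with nonnegative coefficients on the inequalities. By a Carath\'eodory-type reduction for cones, for each $k$ I may assume that the gradients actually appearing (those carrying a nonzero multiplier) are linearly independent at $w^k$, while keeping the inequality multipliers nonnegative. Because the relevant index sets are finite, after passing to a subsequence the active index set used is constant, say $I_0\cup J_0$.

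The main obstacle is the boundedness of these reduced multipliers, and this is precisely where CPLD enters. Suppose the reduced multiplier sequence were unbounded; then, after normalizing by its norm and extracting a convergent subsequence, the limit is a nonzero vector. Passing to the limit in the stationarity relation, using continuity of all the gradients guaranteed by the twice continuous differentiability hypotheses, yields a nontrivial positive-linear-dependence relation among $\{\nabla G_i(\bar{w}),\,i\in I_0\}\cup\{\nabla H_j(\bar{w}),\,j\in J_0\}$. By the CPLD assumption at $\bar{w}$, these gradients must then be linearly dependent throughout a neighborhood of $\bar{w}$, hence at $w^k$ for all large $k$, contradicting the linear independence arranged by the Carath\'eodory reduction. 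Therefore the reduced multipliers are bounded.

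Finally, with bounded multipliers I extract a convergent subsequence $\mu^k\to\bar{\mu}$ and pass to the limit in the full KKT system. The stationarity equation survives by continuity of the gradients; the sign conditions $\bar{\mu}\ge 0$ are preserved; and the complementarity conditions pass to the limit because any constraint strictly inactive at $\bar{w}$ has a vanishing multiplier for large $k$. In particular, if $f(\bar{x},\bar{y})-f(\bar{x},\bar{z})<0$ then, since $t_k\to 0$, the relaxed constraint is eventually inactive and its multiplier is zero, so its limit vanishes; and constraints active at $\bar{w}$ retain nonnegative limiting multipliers. This yields the KKT conditions of MDP at $(\bar{x},\bar{y},\bar{z},\bar{u},\bar{v})$, completing the argument.
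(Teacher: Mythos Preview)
Your proposal is correct and follows essentially the same approach as the paper: reduce the multipliers via a Carath\'eodory-type argument (the paper invokes Lemma~A.1 of \cite{Steffensen2010new} for this) so that the active gradients are linearly independent at each $w^k$, then use CPLD at $\bar w$ to derive a contradiction from unbounded normalized multipliers, and finally pass to the limit in the KKT system. The only cosmetic difference is that you pigeonhole to a fixed index set $I_0\cup J_0$ before the boundedness argument, whereas the paper works directly with the support of the normalized limit multiplier; both variants yield the same contradiction.
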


\begin{proof}
As in the above proof, it is sufficient to show the case of MDP. For simplicity, we take the upper constraint $(x,y)\in \Omega$ away from MDP and $\mathrm{MDP}(t)$, and denote
\begin{eqnarray*}		&&\bar{w}:=(\bar{x},\bar{y},\bar{z},\bar{u},\bar{v}), ~~~w^k:=(x^k,y^k,z^k,u^k,v^k), ~~~w:=(x,y,z,u,v),\\
		&&F_0(w):=F(x,y), ~~~G_0(w):=f(x,y)-f(x, z), ~~~G_1(w):=-u^Tg(x,z)-v^Th(x,z),\\
		&&G_i(w):=g_{i-1}(x,y) ~(2\le i\le p+1),  ~~~G_i(w):=-u_{i-p-1} ~(p+2\le i\le 2p+1),\\
		&&H_i(w):=h_i(x,y) ~(1\le i\le q),  ~~H_i(w):=\nabla_{z_{i-q}}L(x,z,u,v) ~(q+1\le i\le q+m).
\end{eqnarray*}

Then, MDP and $\mathrm{MDP}(t_k)$ become
\begin{eqnarray}\label{app-MDP}
\min&&F_0(w) \nonumber \\
\mbox{\rm s.t.} && G_i(w) \leq 0 ~(i=0,1,\cdots,2p+1),\\
&&H_i(w) =0 ~(i=1,\cdots,q+m) \nonumber
\end{eqnarray}
and
\begin{eqnarray}\label{app-MDPt}
\min &&F_0(w) \nonumber \\
\mbox{\rm s.t.} && G_0(w) \leq t_k, \nonumber\\
&& G_i(w) \leq 0 ~(i=1,\cdots,2p+1),\\
&&H_i(w) =0~(i=1,\cdots,q+m). \nonumber
\end{eqnarray}
Obviously, $\bar{w}$ is a feasible point of MDP and, without loss of generality, we may assume that the whole sequence $\{w^k\}$ converges to $\bar{w}$.
Since $w^k$ is a KKT point of \eqref{app-MDPt}, it follows from Lemma A.1 in \cite{Steffensen2010new}
that there exists $(\lambda_0^k,\lambda^k,\mu^k) \in \mathbb{R}\times\mathbb{R}^{2p+1}\times\mathbb{R}^{q+m}$ such that
\begin{eqnarray}\label{app-MDPt1}
&&\nabla F_0(w^k)+\lambda_0^k\nabla G_0(w^k)+\nabla G(w^k)\lambda^k+\nabla H(w^k)\mu^k=0,
		\nonumber\\
&&\lambda_0^k(G_0(w^k)-t_k)=0, ~~\lambda_0^k\geq0,
\\
&&G(w^k)^T\lambda^k=0, ~~\lambda^k\geq0,\nonumber
\end{eqnarray}
and the gradients
\begin{eqnarray}\label{independent}
\{\nabla G_i(w^k): 
		\lambda_i^k>0,~0\le i\le 2p+1 \}\cup\{\nabla H_i(w^k): 
\mu_i^k\not=0\}
\end{eqnarray}
are linearly independent. To show that $\bar{w}$ is a KKT point of \eqref{app-MDP}, it is sufficient to show the boundedness of the sequence of multipliers $\{\lambda_0^k,\lambda^k,\mu^k\}$.

Assume by contradiction that $\{\lambda_0^k,\lambda^k,\mu^k\}$ is unbounded. Taking a subsequence if necessary, we assume
\begin{eqnarray}\label{0}		\lim_{k\to\infty}\frac{(\lambda_0^k,\lambda^k,\mu^k)}
		{\|(\lambda_0^k,\lambda^k,\mu^k)\|} = (\lambda_0^*,\lambda^*,\mu^*). 
\end{eqnarray}
In particular, for every $k$ sufficiently large, we have
\begin{eqnarray}\label{supp}
\lambda_i^*>0 \ \Rightarrow \ \lambda_i^k>0, \quad \mu_i^*\not=0 \ \Rightarrow \ \mu_i^k\not= 0.
\end{eqnarray}
Dividing by $\|(\lambda_0^k,\lambda^k,\mu^k)\|$ in \eqref{app-MDPt1} and taking a limit, we have
\begin{eqnarray}\label{app-MDPt2}
\lambda_0^*\nabla G_0(\bar{w})+\sum_{\lambda_i^*>0}\lambda_i^*\nabla G_i(\bar{w})+\sum_{\mu_i^*\not=0}\mu_i^*\nabla H_i(\bar{w})=0, ~~\lambda_0^*\geq0.
\end{eqnarray}
Since $\|(\lambda_0^*,\lambda^*,\mu^*)\|=1$ by \eqref{0}, \eqref{app-MDPt2} implies that the gradients
\begin{eqnarray*}
\{\nabla G_i(\bar{w}): \lambda_i^*>0, 0\le i\le 2p+1\}\cup\{\nabla H_i(\bar{w}): \mu_i^*\not=0\}
\end{eqnarray*}
are positive-linearly dependent. Since problem \eqref{app-MDP} satisfies the CPLD at $\bar{w}$, the gradients
\begin{eqnarray*}
\{\nabla G_i(w): \lambda_i^*>0, 0\le i\le 2p+1\}\cup\{\nabla H_i(w): \mu_i^*\not=0\}
\end{eqnarray*}
are linearly dependent in a neighborhood. This, together with \eqref{supp}, contradicts the linear independence of the vectors in \eqref{independent}.
Consequently, $\{\lambda_0^k,\lambda^k,\mu^k\}$ is bounded.
This completes the proof.
\end{proof}

\setcounter{table}{0}
\renewcommand\thetable{C.\arabic{table}}
\section*{Appendix C: Numerical results}

\begin{table}[htbp]
\renewcommand\arraystretch{1}
  \centering
  \caption{Numerical results of three relaxation schemes for MDP with $\{n =20,l=30,m=60,p=50\}$}
  \tabcolsep=1.5mm
%
  \label{tab:numerical results for eMDP (m=160, p=150)}
\end{table}%

\end{appendices}

%
%
%
%
%
%

	\end{document}